\newtheorem{theorem}{Theorem}[section]
\newtheorem{proposition}[theorem]{Proposition}
\newtheorem{lemma}[theorem]{Lemma}
\newtheorem{conjecture}[theorem]{Conjecture}
\newtheorem{claim}[theorem]{Claim}
\theoremstyle{definition}
\begin{document}
\title{\textbf{There does not exist a strongly regular graph with parameters $(1911,270,105,27)$}}
\author[a,b]{Jack H. Koolen}
\author[a]{Brhane Gebremichel\footnote{Corresponding author.}}
\affil[a]{\footnotesize{School of Mathematical Sciences, University of Science and Technology of China, 96 Jinzhai Road, Hefei, 230026, Anhui, PR China.}}
\affil[b]{\footnotesize{CAS Wu Wen-Tsun Key Laboratory of Mathematics, University of Science and Technology of China, 96 Jinzhai Road, Hefei, Anhui, 230026, PR China}}
\date{}
\maketitle
\newcommand\blfootnote[1]{%
\begingroup
\renewcommand\thefootnote{}\footnote{#1}%
\addtocounter{footnote}{-1}%
\endgroup}
\blfootnote{E-mail addresses: {\tt koolen@ustc.edu.cn} (J.H. Koolen), {\tt brhaneg220@mail.ustc.edu.cn} (B. Gebremichel).}

\begin{abstract}
In this paper we show that there does not exist a strongly regular graph with parameters $(1911,270,105,27)$.
\end{abstract}

\textbf{Keywords} : strongly regular graph, join of graphs.

\textbf{Mathematics Subject Classification:} 05C50, 05E30.

\section{Introduction}
In this paper we show the following result:
\begin{theorem}\label{SRG(1911,270,105,27)}
There does not exist a strongly regular graph with parameters $(1911,270,105,27)$.
\end{theorem}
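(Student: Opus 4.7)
The plan is to rule out such a graph $G$ via two stages: first, observe that the only partial-geometric realization compatible with the parameters $(1911,270,105,27)$ is inadmissible; second, force $G$ to carry such a realization.

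The spectrum is quickly computed: since $(\lambda-\mu)^{2}+4(k-\mu)=7056=84^{2}$, the non-principal eigenvalues are $r=81$ and $s=-3$ with multiplicities $65$ and $1845$. The smallest eigenvalue $-3$ is decisive. The Hoffman-Delsarte bound yields $\omega(G)\le 1-k/s=91$, and a routine variance argument shows that every vertex outside a hypothetical $91$-clique would have exactly $9$ neighbors in it. Matching these values against the point-graph identities of a partial geometry $pg(s,t,\alpha)$---$k=s(t+1)$, $\mu=\alpha(t+1)$, $\lambda=s-1+t(\alpha-1)$, with eigenvalues $k$, $s-\alpha$, $-(t+1)$---forces $t=2$, $s=90$, $\alpha=9$. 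But every partial geometry satisfies $\alpha\le\min(s+1,t+1)=3$, so a $pg(90,2,9)$ cannot exist. Hence it suffices to show that $G$, if it exists, must be the point graph of such a partial geometry.

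The heart of the proof is therefore to show that the edges of $G$ must partition into Delsarte $91$-cliques with each vertex in exactly $-s=3$ of them. I would analyze the local graph $\Delta(v)$ at a vertex $v$: it is $105$-regular on $270$ vertices and, by interlacing, has smallest eigenvalue at least $-3$ and second-largest eigenvalue at most $81$. If the required Delsarte partition exists at $v$, then $\Delta(v)$ must decompose as the vertex-disjoint union of three induced cliques $K_{90}$, joined pairwise by an $8$-regular bipartite subgraph (since every vertex not in a Delsarte clique has $9$ neighbors in it, one of which is $v$ itself). This is precisely where the paper's keyword ``join of graphs'' enters: each Delsarte clique through $v$ is itself the join $\{v\}+K_{90}$, and the union of all three Delsarte cliques through $v$ is obtained by adjoining $v$ to a specific join-like decomposition of $\Delta(v)$. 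I would first use the Hoffman bound inside $\Delta(v)$ to cap its maximum clique at $90$, then use a variance argument on the count of $90$-cliques through a fixed vertex of $\Delta(v)$, combined with the SRG common-neighbor counts, to force the partition.

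The principal obstacle will be exhibiting even a single Delsarte $91$-clique in $G$, because the parameter set lies exactly at the threshold $\mu=m^{3}=27$ (with $m=3$) and so falls outside Neumaier's classification of geometric SRGs with smallest eigenvalue $-m$. I expect this step to rely on a carefully tailored counting argument involving induced joins of the form $K_{2}+\Lambda(u,w)$, where $\Lambda(u,w)$ is the $\lambda$-graph of an edge $uw$: the $107$-vertex induced join subgraph has a spectrum that must interlace with $\{270,81,-3\}$, and the resulting constraints on the $\lambda$-graphs should be strong enough to force a Delsarte clique through every edge. Once such cliques are produced, the partition step and the final contradiction with $\alpha\le t+1$ follow at once.
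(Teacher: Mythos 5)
Your spectral preliminaries are correct---the eigenvalues are indeed $81$ and $-3$ with multiplicities $65$ and $1845$, the Delsarte bound is $91$, the only compatible partial geometry is $pg(90,2,9)$, and $\alpha=9>\min(s+1,t+1)=3$ rules that geometry out---but the heart of your plan, forcing $G$ to carry an edge partition into Delsarte $91$-cliques, is not merely unproven: it is provably false for these parameters. The paper shows in Lemma \ref{clique}, via the Greaves--Koolen--Park inequality of Lemma \ref{cubic polynomial} (since $\mu=27>m(m-1)=6$, any maximal clique of order $c>32\frac{5}{7}$ would have to satisfy $M_G(c)=672c^3-80784c^2+1468512c+3277200\geqslant 0$, forcing $c\geqslant 98$, which exceeds the Delsarte bound $91$), that every clique in a putative $G$ has order at most $32$. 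So not a single Delsarte $91$-clique can exist, and no interlacing or counting scheme of the kind you sketch---joins $K_2\nabla\Lambda(u,w)$, variance counts of $90$-cliques in $\Delta(v)$---can manufacture one; the decomposition of $\Delta(v)$ into three disjoint $K_{90}$'s that you describe is structurally impossible. As you yourself note, the parameters fall below Neumaier's geometricity threshold, and there is no general principle forcing geometricity here: ruling out the \emph{non-geometric} case is precisely the content of the theorem, and your reduction to the nonexistence of $pg(90,2,9)$ leaves that case entirely untouched.

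The paper's actual route runs in the opposite direction: instead of huge cliques it exploits cliques of order about $29$ that intersect heavily. It first shows $G$ cannot be a Terwilliger graph (Lemma \ref{terwilliger graph} would force $k\geqslant 50(\mu-1)=1300>270$), so $G$ contains an induced quadrangle $x\sim u\sim y\sim v\sim x$. Setting $W=\{w\sim x\mid w\neq u,\,w\neq v,\,w\not\sim u,\,w\not\sim v\}$, coclique counting in the local graph $\Delta(x)$ (Lemma \ref{coclique}, with equality in the bound for independent $5$-sets) pins down $c(u,v)\in\{24,25\}$, hence $|W|\in\{82,83\}$, and shows the non-neighbourhoods $K_z$ inside $W$ are complete of order at least $28$ (Lemma \ref{the graph on W and its valency}). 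Both cases $|W|=83$ and $|W|=82$ are then eliminated using Lemma \ref{27 clique}, which severely constrains two cliques of order at least $29$ meeting in at least $22$ vertices, together with the $H(a,t)$-type neighbour restrictions recorded in Table \ref{table}. If you wish to salvage your write-up, the spectral stage can stay as background, but the geometric-realization stage must be replaced by an argument that works when the clique number sits far below the Delsarte bound---which is exactly where all of the paper's work lies.
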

This was the largest open case of a set of feasible parameters of a strongly regular graph with smallest eigenvalue $-3$. We conjecture:

\begin{conjecture}
Let $G$ be a primitive strongly regular graph with parameters $(n,k, \lambda, \mu)$ and smallest eigenvalue $-3$. Then either $\mu \in \{6,9\}$ or $n \leqslant 276$.
\end{conjecture}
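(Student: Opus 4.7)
The plan is to assume a strongly regular graph $\Gamma$ with parameters $(1911,270,105,27)$ exists and to derive a contradiction from the rigidity imposed by smallest eigenvalue $s=-3$. The non-principal eigenvalues are $r=81$ and $s=-3$, with multiplicities $f=65$ and $g=1845$, and the standard linear-programming bounds give $\alpha(\Gamma)\leq 21$ (Hoffman) and $\omega(\Gamma)\leq 91$ (Delsarte); a tight Hoffman coclique would be met in exactly $3$ edges by every external vertex, giving a regular incidence pattern. Interlacing pushes this rigidity down to the local graph $\Delta(x)$, which is $105$-regular on $270$ vertices with smallest eigenvalue at least $-3$, and every $\mu$-graph on $27$ vertices is a graph with clique number at most $3$.

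The backbone of the proof, as signalled by the abstract's mention of joins, should be to show that $\Delta(x)$ (or a suitable induced subgraph of $\Gamma$) must contain the join $G_{1}\nabla G_{2}$ of two regular graphs. The classification of graphs with smallest eigenvalue $-3$ (Seidel, Cameron--Goethals--Seidel, and refinements by Koolen and coauthors) restricts $\Delta(x)$ to a short list of possible types, and one would analyse each in turn to exhibit such a join. When pulled back to $\Gamma$, a join $V_{1}\nabla V_{2}$ inside the graph produces disjoint sets with every vertex of $V_{1}$ adjacent to every vertex of $V_{2}$; counting common neighbours of a pair from opposite parts against $\lambda=105$, or applying eigenvalue interlacing to the join block to force an eigenvalue outside $\{270,81,-3\}$, should then give the contradiction.

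The main obstacle I foresee is locating the join in the first place. A join decomposition of the whole graph is immediately excluded, since $\Gamma$ is not a complete multipartite graph ($\mu\neq k$), and even $\Delta(x)$ cannot itself be a non-trivial join $G_{1}\nabla G_{2}$: regularity of degree $105$ would require $|G_{1}|\leq 105$ and $|G_{2}|\leq 105$, while $|G_{1}|+|G_{2}|=270$. Hence the join must live inside a carefully chosen proper induced subgraph --- most plausibly on the complement of a Delsarte clique or inside the closed neighbourhood of a Hoffman coclique --- and correctly identifying this subgraph together with the regular factors $G_{1},G_{2}$ is the technical core of the argument. Once these are in hand, the final contradiction by double counting or interlacing should be essentially mechanical.
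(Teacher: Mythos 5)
Your proposal does not prove the statement it targets, and indeed cannot: the statement is the paper's \emph{Conjecture}, which the paper itself leaves open. The authors prove only the single parameter set $(1911,270,105,27)$ (their Theorem 1.1), which was the largest open case, and they explicitly exhibit twelve further feasible parameter sets with smallest eigenvalue $-3$, $n>276$ and $\mu\notin\{6,9\}$ whose existence is unresolved (Table 1, ranging from $(288,105,52,30)$ to $(1344,221,88,26)$). Your entire plan is to refute $(1911,270,105,27)$; even if carried out flawlessly it would remove one case and leave the conjecture exactly as open as the paper leaves it. A correct response here would either acknowledge that the statement is conjectural and unproved, or supply arguments covering all remaining parameter sets --- not just one.

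Even read charitably as an attack on the $(1911,270,105,27)$ case, the sketch has a fatal step and misreads the role of joins. Your backbone relies on ``the classification of graphs with smallest eigenvalue $-3$ (Seidel, Cameron--Goethals--Seidel)'' to restrict $\Delta(x)$ to a short list: no such classification exists --- Cameron--Goethals--Seidel classifies smallest eigenvalue $-2$ via root lattices, and for $-3$ only partial structural results are known. Worse, your own (correct) observation that $\Delta(x)$ cannot be a nontrivial join already signals that the strategy of \emph{locating} a join inside the graph is misdirected; the paper uses joins in the opposite direction. It forms the \emph{external} graph $K_4\nabla\Gamma_W$, where $\Gamma_W$ is the subgraph induced on the set $W$ of neighbours of $x$ non-adjacent to both ends $u,v$ of an induced quadrangle (guaranteed by the Terwilliger bound, since $k=270<50(\mu-1)=1300$), checks via its join lemma that $\lambda_{\min}(K_4\nabla\Gamma_W)\geqslant -3$ still holds, and then applies the $H(a,t)$ intersection bounds to the enlarged cliques $K_4\nabla K_w$. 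The actual engine of the proof is elsewhere entirely: the GKP cubic bound forcing every clique to have order at most $32$, the computation $|W|\in\{82,83\}$ with local valencies $k_w\in\{53,54,|W|-1\}$, and the lemma that two cliques of order at least $29$ meeting in at least $22$ vertices force either a dominating vertex on the symmetric difference or two maximal $29$-cliques meeting in exactly $27$ vertices --- which is then contradicted by an edge count. None of this appears in your proposal, and your side claim that every $\mu$-graph has clique number at most $3$ is unjustified (the clique lemma only gives at most $31$ after adjoining $x$). Finally, note the abstract does not mention joins; only the keywords do, and they refer to the auxiliary construction $K_4\nabla\Gamma_W$, not to a join hidden inside $G$.
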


On this moment, there are twelve cases of parameter sets of putative primitive strongly regular graphs with smallest eigenvalue $-3$, $n > 276$ and $\mu \not\in \{6,9\}$ which are open. They are in Table \ref{table1} below (cf. \cite{QP17}).

To show the main result we find large cliques that intersect in many vertices. We are sure that the method we use in this paper can be generalized to the larger class of amply regular graphs (for a definition, see \cite{GKP2021}).

\begin{table}[h]\label{table1}
  \centering
  \begin{tabular}{c|c||c|c}
  \hline
  $(v,k,\lambda, \mu)$ & $\theta_0, [\theta_1]^{m(\theta_1)}, [\theta_2]^{m(\theta_2)}$ & $(v,k,\lambda, \mu)$ & $\theta_0, [\theta_1]^{m(\theta_1)}, [\theta_2]^{m(\theta_2)}$\\  \hline
  $(288,105,52,30)$ & $105, [25]^{27}, [-3]^{260}$ & $(476,133,60,28)$ & $133, [35]^{34}, [-3]^{441}$\\
  $(300,117,60,36)$ & $117, [27]^{26}, [-3]^{273}$ & $(540,147,66,30)$ & $147, [39]^{35}, [-3]^{504}$\\
  $(351,140,73,44)$ & $140, [32]^{26}, [-3]^{324}$ & $(550,162,75,36)$ & $162, [42]^{33}, [-3]^{516}$\\
  $(375,102,45,21)$ & $102, [27]^{34}, [-3]^{340}$ & $(575,112,45,16)$ & $112, [32]^{46}, [-3]^{528}$\\
  $(405,132,63,33)$ & $132, [33]^{30}, [-3]^{374}$ & $(703,182,81,35)$ & $182, [49]^{37}, [-3]^{665}$\\
  $(441,88,35,13)$ & $88, [25]^{44}, [-3]^{396}$ & $(1344,221,88,26)$ & $221, [65]^{56}, [-3]^{1287}$ \\
  \hline
\end{tabular}
  \caption{List of putative primitive strongly regular graphs with smallest eigenvalue $-3$ for $n >276$, unknown whether they exist or not.}
\end{table}

 This paper is organized as follows: In the next section we give the preliminaries. In Section \ref{sec large cliques} we give restrictions on graphs with large cliques and smallest eigenvalue at least $-3$. In Section \ref{sec on the local graph of $G$} we find large cliques in a putative strongly regular graph with parameters $(1911,270,105,27)$ and apply the restrictions given in Section \ref{sec large cliques} to show the main result.

\section{Preliminaries}\label{Pre}

\subsection{Graphs}
In this paper all the graphs are finite, undirected and simple. For definitions, we do not define, see \cite{BH11}. The eigenvalues of a graph are the eigenvalues of its adjacency matrix $A(G)$ indexed by $V(G)$, such that $A_{xy} = 1$ if $xy \in E(G)$ and $0$ otherwise. The smallest eigenvalue of a graph is denoted by $\lambda_{\min}(G)$.

Let $G=(V(G),E(G))$ be a graph. The valency $k_x$ of a vertex $x$ of $G$ is the number of neighbours of $x$, i.e. the vertices $y \in V(G)$ such that $xy \in E(G)$. A graph $G$ is $k$-regular if $k_x=k$ for all vertices $x \in V(G)$. A graph $G$ is strongly regular with parameters $(n,k,\lambda,\mu)$ if $G$ has $n$ vertices, is $k$-regular and any two distinct vertices have exactly $\lambda$ (resp. $\mu$) common neighbours if they are adjacent (resp. non-adjacent). The strongly regular graph $G$ is called primitive if $G$ and its complement are both connected.

\subsection{Interlacing}

If $M$ (resp. $N$) is a real symmetric $m\times m$ (resp. $n\times n$) matrix with $\theta_1(M)\geqslant \theta_2(M)\geqslant \cdots\geqslant \theta_m(M)$ (resp. $\theta_1(N)\geqslant\theta_2(N) \geqslant \cdots\geqslant\theta_n(N)$) the eigenvalues of $M$ (resp. $N$) in nonincreasing order. Assume $m\leqslant n$. Then we say that the eigenvalues of $M$ \emph{interlace} the eigenvalues of $N$, if $\theta_{n-m+i}(N)\leqslant\theta_i(M)\leqslant\theta_i(N)$ for $i=1,\ldots,m$.

The following result is a special case of interlacing.

\begin{lemma}[{cf. \cite[Theorem 9.1.1]{GD01}}]\label{interlacing}
Let $B$ be a real symmetric $n\times n$ matrix and $C$ be a principal submatrix of $B$ of order $m$, where $m<n$. Then the eigenvalues of $C$ interlace the eigenvalues of $B$.
\end{lemma}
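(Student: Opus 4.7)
The plan is to prove Cauchy's interlacing theorem via the Courant--Fischer min--max characterization of eigenvalues of a real symmetric matrix. Recall that for a real symmetric $n\times n$ matrix $B$ with eigenvalues $\theta_1(B)\geqslant\cdots\geqslant\theta_n(B)$, one has
\[
\theta_i(B)=\max_{\substack{S\leqslant\mathbb{R}^n\\ \dim S=i}}\ \min_{\substack{x\in S\\ x\ne 0}}\frac{x^{\top}Bx}{x^{\top}x}=\min_{\substack{T\leqslant\mathbb{R}^n\\ \dim T=n-i+1}}\ \max_{\substack{x\in T\\ x\ne 0}}\frac{x^{\top}Bx}{x^{\top}x}.
\]
So the first step is simply to invoke this characterization, which I would treat as a known fact.

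Next, I would set up notation: write $C=B[I,I]$, where $I\subseteq\{1,\ldots,n\}$ with $|I|=m$, and let $V\leqslant\mathbb{R}^n$ be the $m$-dimensional coordinate subspace of vectors supported on $I$. The key observation is that for every $x\in V$, if $\tilde{x}\in\mathbb{R}^m$ is the restriction of $x$ to the coordinates in $I$, then $x^{\top}Bx=\tilde{x}^{\top}C\tilde{x}$ and $x^{\top}x=\tilde{x}^{\top}\tilde{x}$. In particular, the Rayleigh quotient of $B$ on $V$ coincides with the Rayleigh quotient of $C$ on $\mathbb{R}^m$ under this identification.

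With this in hand, the two inequalities fall out of Courant--Fischer. For the upper bound $\theta_i(C)\leqslant\theta_i(B)$, I would take an $i$-dimensional subspace $S_C\leqslant\mathbb{R}^m$ achieving the max for $\theta_i(C)$, lift it to an $i$-dimensional subspace of $V\leqslant\mathbb{R}^n$, and use it as a test subspace in the max for $\theta_i(B)$. For the lower bound $\theta_{n-m+i}(B)\leqslant\theta_i(C)$, I would use the dual form: take an $(n-(n-m+i)+1)=(m-i+1)$-dimensional test subspace $T_C\leqslant\mathbb{R}^m$ that achieves the min for $\theta_i(C)$ (viewed as $\min\max$), lift it into $V$, and use it as a test subspace in the min characterization of $\theta_{n-m+i}(B)$, which requires a subspace of dimension $n-(n-m+i)+1=m-i+1$ --- exactly matching.

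I expect the only mildly subtle point to be making sure the dimensions line up cleanly in the second inequality, since one has to switch between the max-min and min-max forms of Courant--Fischer and verify that the lifted subspace has the correct dimension relative to $\mathbb{R}^n$ rather than $\mathbb{R}^m$. Everything else is bookkeeping. An equivalent route, if one prefers to avoid the dual form, is to reduce to the case $m=n-1$ (removing one index at a time) and prove the single-step interlacing $\theta_i(B)\geqslant\theta_i(C)\geqslant\theta_{i+1}(B)$ by the same max-min argument; the general case then follows by iteration, giving the shift $n\mapsto n-m$ on the lower index.
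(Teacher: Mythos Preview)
Your argument via the Courant--Fischer min--max characterization is correct and is the standard textbook proof of Cauchy's interlacing theorem; the dimension bookkeeping you flag in the lower bound is handled exactly as you describe. Note, however, that the paper does not supply its own proof of this lemma: it is simply quoted from the literature (Godsil--Royle) as a known result, so there is nothing to compare your approach against.
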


As an easy consequence of Lemma \ref{interlacing}, we have the following proposition.
\begin{proposition}\label{subgraph}
Let $G$ be a graph and $H$ a proper induced subgraph of $G$. Denote by $\theta_{\min}(G)$ (resp. $\theta_{\min}(H)$) the smallest eigenvalue of $G$ (resp. $H$). Then $\theta_{\min}(G)\leqslant \theta_{\min}(H)$.
\end{proposition}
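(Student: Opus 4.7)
The plan is to apply Lemma \ref{interlacing} directly to the adjacency matrices. Set $n = |V(G)|$ and $m = |V(H)|$, so $m < n$ since $H$ is a proper induced subgraph. Order the vertices of $G$ so that its first $m$ indices correspond to the vertices of $H$. Because $H$ is an \emph{induced} subgraph, the edges among the vertices of $H$ are exactly those of $G$, and therefore the leading $m \times m$ principal submatrix of $A(G)$ coincides with $A(H)$.

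With this identification, Lemma \ref{interlacing} applies with $B = A(G)$ and $C = A(H)$, and it yields $\theta_{n-m+i}(A(G)) \leqslant \theta_i(A(H)) \leqslant \theta_i(A(G))$ for $i = 1, \ldots, m$. Specialising to $i = m$ gives $\theta_n(A(G)) \leqslant \theta_m(A(H))$, which is precisely the inequality $\theta_{\min}(G) \leqslant \theta_{\min}(H)$ to be proved.

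There is essentially no obstacle here: the proposition is a one-line corollary of Cauchy interlacing. The only place where care is needed is the verification that $A(H)$ is truly a principal submatrix of $A(G)$, which relies on $H$ being induced (otherwise some off-diagonal entries could differ). Since this is recorded as an immediate consequence of the previous lemma, I expect the author's proof to amount to little more than the two sentences above.
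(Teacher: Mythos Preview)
Your proof is correct and matches the paper's approach exactly: the paper records this proposition as ``an easy consequence of Lemma~\ref{interlacing}'' without giving further details, and your argument spells out precisely that consequence by identifying $A(H)$ as a principal submatrix of $A(G)$ and reading off the $i=m$ case of the interlacing inequalities.
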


Let $G=(V,E)$ be a graph and $\pi:=\{V_1,\ldots,V_r\}$ be a partition of $V$. We say $\pi$ is an \emph{equitable partition} with respect to $G$ if the number of neighbours in $V_j$ of a vertex $u$ in $V_i$ is a constant $q_{ij}$, independent of $u$, only dependent on $i$ and $j$. For an equitable partition $\pi$ with respect to $G$, the quotient matrix $Q$ of $G$ with respect to $\pi$ is defined as $Q=(q_{ij})_{1\leqslant i,j\leqslant r}$.

\begin{lemma}[{cf.\cite[Theorem 9.3.3]{GD01}}]\label{quotientmatrix}
Let $G$ be a graph. If $\pi$ is an equitable partition of $G$ and $Q$ is the quotient matrix with respect to $\pi$ of $G$, then every eigenvalue of $Q$ is an eigenvalue of $G$.
\end{lemma}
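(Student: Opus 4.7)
The plan is to exhibit, for every eigenvalue $\theta$ of $Q$, a nonzero vector in $\mathbb{R}^{|V(G)|}$ that is an eigenvector of the adjacency matrix $A=A(G)$ with eigenvalue $\theta$. The bridge between $A$ and $Q$ is the characteristic matrix of the partition.

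First I would introduce the $|V(G)| \times r$ characteristic matrix $P = (P_{u,i})$ of $\pi$, defined by $P_{u,i}=1$ if $u\in V_i$ and $P_{u,i}=0$ otherwise. The columns of $P$ are the indicator vectors of the cells $V_1,\ldots,V_r$; since the cells are nonempty and pairwise disjoint, these columns are linearly independent, so the linear map $v\mapsto Pv$ from $\mathbb{R}^r$ to $\mathbb{R}^{|V(G)|}$ has trivial kernel.

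The key computation is the identity $AP = PQ$. For $u \in V_i$, the $(u,j)$-entry of $AP$ equals the number of neighbours of $u$ lying in $V_j$, which by the equitability of $\pi$ equals $q_{ij}$, independent of the choice of $u \in V_i$. On the other hand, the $(u,j)$-entry of $PQ$ is $\sum_{\ell} P_{u,\ell} q_{\ell j} = q_{ij}$ since $u \in V_i$. Hence the two matrices agree entrywise.

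Now given any eigenvalue $\theta$ of $Q$ with eigenvector $v \in \mathbb{R}^r \setminus \{0\}$, set $w:=Pv$. The identity above yields
\[
Aw = APv = PQv = P(\theta v) = \theta\, Pv = \theta w,
\]
and $w\neq 0$ since $P$ has trivial kernel and $v\neq 0$. Therefore $\theta$ is an eigenvalue of $A$, that is, an eigenvalue of $G$. There is no real obstacle here; the only point requiring a moment of care is noting that $P$ has full column rank so $Pv$ does not collapse to zero, and verifying the matrix identity $AP=PQ$ directly from the definition of equitable partition and quotient matrix.
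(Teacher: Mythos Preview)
Your proof is correct and is the standard argument via the characteristic matrix identity $AP=PQ$. The paper itself does not prove this lemma but merely cites it from \cite[Theorem 9.3.3]{GD01}, where essentially the same proof appears.
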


\subsection{Terwilliger graphs}
A \emph{Terwilliger graph} is a non-complete graph such that, for any two vertices $x$ and $y$ at distance $2$, the subgraph induced by common neighbours of $x$ and $y$ forms a clique of order $c$ (for some fixed $c\geqslant 0$).

\begin{lemma}[{cf. \cite[Corollary 1.16.6 (ii)]{BCN}}]\label{terwilliger graph}
There is no strongly regular Terwilliger graph with parameters $(n,k,\lambda,\mu)$ for $k < 50(\mu-1)$.
\end{lemma}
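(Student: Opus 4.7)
The plan is to exploit the Terwilliger condition in combination with the strong-regularity identities to force a lower bound of the shape $k \geq C(\mu-1)$. I would start by recasting the hypothesis locally. Since the connected graph $G$ is strongly regular with parameters $(n,k,\lambda,\mu)$, any two non-adjacent vertices are at distance exactly $2$ and share exactly $\mu$ common neighbors, which by the Terwilliger condition must form a clique $K_\mu$. Fix a vertex $x$. If $u,v\in\Gamma(x)$ are non-adjacent in $G$, then $\{x\}\cup(\Gamma(u)\cap\Gamma(v))$ is a $K_\mu$, so $\Gamma(u)\cap\Gamma(v)\cap\Gamma(x)$ is a clique on $\mu-1$ vertices. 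Hence the local graph $\Gamma(x)$ is a $\lambda$-regular graph of order $k$ in which every non-edge has exactly $\mu-1$ common neighbors, and these neighbors are mutually adjacent.

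Next, I would carry out a double count. Let $\mathcal{F}_x$ be the family of cliques $\{x\}\cup(\Gamma(x)\cap\Gamma(y))$ as $y$ ranges over the non-neighbors of $x$: each member of $\mathcal{F}_x$ is a $K_\mu$, and each non-edge of $\Gamma(x)$ is covered by exactly one member of $\mathcal{F}_x$. Counting non-edges of $\Gamma(x)$ in two ways, combined with the standard identity $k(k-\lambda-1)=(n-k-1)\mu$, yields a clean combinatorial relation among $k,\lambda,\mu$ and $|\mathcal{F}_x|$. Pushing further, I would try to build an equitable partition on the neighborhood of a fixed edge using its attached $K_\mu$'s and apply Lemma~\ref{quotientmatrix} to extract an eigenvalue of $G$ from the quotient matrix, which one then compares with the closed-form $\theta_{\min}$ of a strongly regular graph.

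As an alternative route, I would apply Lemma~\ref{interlacing} directly to an induced subgraph that captures the forced local $K_\mu$-structure, for instance the subgraph induced on the union of two overlapping members of $\mathcal{F}_x$. Such cliques have controlled pairwise intersections, so their joint spectrum is explicitly computable and imposes a ceiling on $\theta_{\min}(G)$ via Proposition~\ref{subgraph}. Feeding this ceiling back into the standard expression for the smallest eigenvalue of an SRG should then produce the desired inequality $k \geq 50(\mu-1)$.

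The main obstacle is obtaining the specific constant $50$. A naive double count only gives $k \geq c(\mu-1)$ for a modest $c$; extracting $c \geq 50$ requires both a careful choice of auxiliary principal submatrix and an exploitation of the fact that in a strongly regular Terwilliger graph the $K_\mu$'s cover and overlap the local graph in a rigid way. Identifying the right combination of covering identity and spectral estimate --- sharp enough to produce the numerical constant $50$ rather than something much smaller --- is the technical heart of the argument.
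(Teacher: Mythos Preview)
The paper does not prove this lemma: it is quoted with a citation to \cite[Corollary~1.16.6(ii)]{BCN} and then used as a black box in the proof of Lemma~\ref{quadrangle}. There is therefore no ``paper's own proof'' to compare your proposal against.

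As to the proposal itself, it is a plan rather than a proof, and you correctly flag the gap yourself: nothing in your outline actually produces the constant $50$. The observation that in a strongly regular Terwilliger graph every non-edge of the local graph $\Gamma(x)$ is covered by a $(\mu-1)$-clique is correct and standard, and the double count you describe is sound, but it only yields soft relations among $k$, $\lambda$, $\mu$. The spectral step you sketch --- interlacing on the union of two overlapping members of $\mathcal{F}_x$, or an equitable partition of a neighbourhood --- will at best give $k \geqslant c(\mu-1)$ for a modest $c$; there is no mechanism in what you wrote that would drive $c$ anywhere near $50$. The argument in \cite{BCN} goes through Terwilliger's inequality for distance-regular graphs without induced quadrangles, which controls the eigenvalues far more tightly than an ad hoc local interlacing estimate can, and the constant $50$ emerges from that machinery combined with the explicit strongly-regular eigenvalue formulae. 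Without invoking a result of comparable strength your route does not close, and your final paragraph effectively concedes this.
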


\begin{lemma}\label{quadrangle}
If a strongly regular graph with parameters $(1911, 270, 105, 27)$ exist, then it contains an induced quadrangle.
\end{lemma}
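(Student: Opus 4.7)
The plan is to argue by contradiction using the Terwilliger graph obstruction of Lemma \ref{terwilliger graph}. Suppose that $G$ is a strongly regular graph with parameters $(1911, 270, 105, 27)$ that contains no induced quadrangle; the goal is to show that $G$ would then be forced into a parameter range forbidden by that lemma.

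The key reformulation is that absence of an induced $C_4$ is equivalent to the Terwilliger condition with $c=\mu=27$. Concretely, let $x,y$ be any two vertices at distance $2$ in $G$; by strong regularity they have exactly $\mu=27$ common neighbours. If two of these common neighbours $u,v$ were non-adjacent, then $\{x,u,y,v\}$ would induce a quadrangle, since $xu, uy, yv, vx$ are edges while $xy$ and $uv$ are non-edges. Hence the assumption of no induced quadrangle forces $\Gamma(x)\cap\Gamma(y)$ to be a clique of order $27$ for every pair of vertices at distance $2$. Thus $G$ is a Terwilliger graph with $c=27$.

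Now Lemma \ref{terwilliger graph} asserts that no strongly regular Terwilliger graph exists once $k<50(\mu-1)$. For our parameters, $50(\mu-1)=50\cdot 26=1300$, while $k=270<1300$, so the lemma applies and rules out such a $G$. This contradicts the hypothesis that $G$ exists, which therefore must contain an induced quadrangle.

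Since the argument is a one-step translation followed by invoking a cited result, there is no substantial obstacle; the only thing to be careful about is the bookkeeping of the quadrangle: it must be \emph{induced}, so one must verify both that the two diagonals are non-edges (automatic from $x,y$ being non-adjacent and $u,v$ being chosen non-adjacent) and that the four claimed edges are indeed present (immediate from $u,v\in\Gamma(x)\cap\Gamma(y)$).
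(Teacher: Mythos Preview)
Your proof is correct and follows essentially the same route as the paper: assume no induced quadrangle, observe that the common neighbourhood of any two vertices at distance~$2$ is then a clique so that $G$ is a Terwilliger graph, and apply Lemma~\ref{terwilliger graph} with $50(\mu-1)=1300>270=k$ to obtain a contradiction. Your write-up is in fact a bit more explicit than the paper's in checking why ``no induced $C_4$'' yields the Terwilliger condition.
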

\begin{proof}
Suppose there exist a strongly regular graph $G$ with parameters $(1911, 270, 105, 27)$ which does not contain an induced quadrangles. Then $G$ is a Terwilliger graph. As $\mu= 27$ then, by Lemma \ref{terwilliger graph}, the valency of $G$ is at least $1300$, which is a contradiction as $k =270$. This shows the lemma.
\end{proof}

\subsection{Join of graphs}
Let $G_1$ and $G_2$ be two graphs such that $V(G_1) \cap V(G_2) = \emptyset$. The \emph{join} of $G_1$ and $G_2$, denoted by $G_1 \nabla G_2$, has as vertex set $V(G_1) \cup V(G_2)$ and edge set $E(G_1) \cup E(G_2) \cup \{\{x_1,x_2\} \mid x_1 \in V(G_1), x_2 \in V(G_2)\}$. The following lemma is a consequence of Section $2.3.1$ of \cite{BH11}.

\begin{lemma}\label{join}
Let $G_i$ be a $k_i$-regular graph with $n_i$ vertices, for $i = 1,2$, such that $V(G_1) \cap V(G_2) = \emptyset$. Then the smallest eigenvalue $\lambda_{\min}(G_1 \nabla G_2)$ of the join $G_1 \nabla G_2$ satisfies
\begin{equation*}
  \lambda_{\min} = \min \{ \lambda_{\min}(G_1), \lambda_{\min}(G_2),\lambda_{\min}(Q)\}
\end{equation*}
where
\begin{gather*}
Q=\begin{pmatrix}
k_1 & n_2 \\ n_1 & k_2
\end{pmatrix}.
\end{gather*}
\end{lemma}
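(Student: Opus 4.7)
The plan is to decompose $\mathbb{R}^{n_1+n_2}$ into three mutually orthogonal subspaces that are invariant under $A := A(G_1 \nabla G_2)$ and to read off the spectrum from the restriction to each piece. Writing
$$A = \begin{pmatrix} A(G_1) & J \\ J^\top & A(G_2) \end{pmatrix},$$
where $J$ is the $n_1 \times n_2$ all-ones matrix, the $k_i$-regularity of $G_i$ makes $\mathbf{1}_{n_i}$ an eigenvector of $A(G_i)$, and this is exactly what forces the three subspaces
$$W_1 := \{(v,0) : v \perp \mathbf{1}_{n_1}\}, \quad W_2 := \{(0,w) : w \perp \mathbf{1}_{n_2}\}, \quad W_3 := \mathrm{span}\{(\mathbf{1}_{n_1},0),(0,\mathbf{1}_{n_2})\}$$
to be pairwise orthogonal, to be $A$-invariant, and to sum to all of $\mathbb{R}^{n_1+n_2}$.

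Next I would compute $A$ on each piece. On $W_1$, observe that $J^\top v = (\mathbf{1}_{n_1}^\top v)\,\mathbf{1}_{n_2} = 0$ whenever $v \perp \mathbf{1}_{n_1}$, so $A|_{W_1}$ coincides with $A(G_1)$ restricted to $\mathbf{1}_{n_1}^\perp$; hence its spectrum is a sub-multiset of the spectrum of $G_1$, and in particular $\lambda_{\min}(A|_{W_1}) \geqslant \lambda_{\min}(G_1)$. An identical argument handles $W_2$. On the two-dimensional subspace $W_3$ I would compute directly
$$A\,(\mathbf{1}_{n_1},0)^\top = (k_1 \mathbf{1}_{n_1},\, n_1 \mathbf{1}_{n_2})^\top, \qquad A\,(0,\mathbf{1}_{n_2})^\top = (n_2 \mathbf{1}_{n_1},\, k_2 \mathbf{1}_{n_2})^\top,$$
so in the basis $\{(\mathbf{1}_{n_1},0),(0,\mathbf{1}_{n_2})\}$ the operator $A|_{W_3}$ is represented by $Q$.

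Because the three subspaces are orthogonal, $A$-invariant, and span the whole space, the spectrum of $A$ is the union of the three local spectra, so
$$\lambda_{\min}(G_1 \nabla G_2) = \min\{\lambda_{\min}(A|_{W_1}),\,\lambda_{\min}(A|_{W_2}),\,\lambda_{\min}(Q)\}.$$
Finally I would verify that $\lambda_{\min}(A|_{W_i}) = \lambda_{\min}(G_i)$: an eigenvector of $A(G_i)$ achieving $\lambda_{\min}(G_i)$ can always be chosen orthogonal to $\mathbf{1}_{n_i}$ (as $\mathbf{1}_{n_i}$ is the $k_i$-eigenvector, and $k_i = \lambda_{\min}(G_i)$ forces $G_i$ to be edgeless, a degenerate case easily disposed of). The only mildly subtle point, and the step I expect to treat most carefully, is that $Q$ need not be symmetric when $n_1 \neq n_2$; however, rescaling to the orthonormal basis $\{(\mathbf{1}_{n_1}/\sqrt{n_1},0),(0,\mathbf{1}_{n_2}/\sqrt{n_2})\}$ turns the representing matrix into the symmetric matrix with off-diagonal entry $\sqrt{n_1 n_2}$, which has the same characteristic polynomial as $Q$, so the eigenvalues on $W_3$ are precisely those of $Q$ and are real.
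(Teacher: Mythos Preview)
Your argument is correct and is precisely the standard invariant-subspace decomposition for the join of two regular graphs. The paper does not actually prove this lemma but simply records it as a consequence of Section~2.3.1 of Brouwer--Haemers, \emph{Spectra of Graphs}; your write-up supplies exactly the computation that reference contains, so there is nothing further to compare.
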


%
%
%

The following lemma was inspired by Cao, Koolen, Munemasa, Yoshino \cite{CKMY21}.

\begin{lemma}\label{Join with K_n}
Let $G$ be a $k$-regular graph on $n$ vertices with smallest eigenvalue $\lambda_{\min}(G) \leqslant -1$. Consider $K_t \nabla G$ for some positive integer $t$. Then $\lambda_{\min}(K_t \nabla G) =\lambda_{\min}(G)$ if and only if $(\lambda_{\min}(G)-k)(\lambda_{\min}(G) +1 -t) \geqslant nt$.
\end{lemma}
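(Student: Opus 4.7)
The plan is to apply Lemma \ref{join} directly with $G_1 = K_t$ and $G_2 = G$, and then to translate the resulting condition into the claimed inequality. First I would note that $K_t$ is $(t-1)$-regular on $t$ vertices and has $\lambda_{\min}(K_t) \in \{-1, 0\}$, so in particular $\lambda_{\min}(K_t) \geqslant -1 \geqslant \lambda_{\min}(G)$ by the hypothesis. Lemma \ref{join} therefore gives
\begin{equation*}
\lambda_{\min}(K_t \nabla G) = \min\{\lambda_{\min}(G),\, \lambda_{\min}(Q)\}, \qquad Q = \begin{pmatrix} t-1 & n \\ t & k \end{pmatrix}.
\end{equation*}
Since $\lambda_{\min}(G)$ already appears inside the minimum, the equality $\lambda_{\min}(K_t \nabla G) = \lambda_{\min}(G)$ holds if and only if $\lambda_{\min}(G) \leqslant \lambda_{\min}(Q)$.

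The second step is to convert this last inequality into the stated form. The characteristic polynomial of $Q$ is
\begin{equation*}
p(x) = (x - (t-1))(x - k) - nt,
\end{equation*}
an upward opening parabola whose two real roots are the eigenvalues of $Q$. Its larger root is at least $\max\{t-1, k\} \geqslant 0$, whereas $\lambda_{\min}(G) \leqslant -1 < 0$. Hence $\lambda_{\min}(G)$ cannot lie above the larger root of $p$, and consequently $\lambda_{\min}(G) \leqslant \lambda_{\min}(Q)$ holds exactly when $p(\lambda_{\min}(G)) \geqslant 0$. Using $x - (t-1) = x + 1 - t$, this is
\begin{equation*}
(\lambda_{\min}(G) - k)(\lambda_{\min}(G) + 1 - t) \geqslant nt,
\end{equation*}
which is the inequality asserted in the lemma.

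The only real subtlety is the equivalence in the previous step: a priori $p(\lambda_{\min}(G)) \geqslant 0$ could also mean $\lambda_{\min}(G) \geqslant \lambda_{\max}(Q)$, and this spurious branch is ruled out precisely by the estimate on the larger root of $Q$ together with the assumption $\lambda_{\min}(G) \leqslant -1$. Apart from this sign bookkeeping, the rest of the proof is routine algebra.
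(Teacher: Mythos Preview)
Your proof is correct and follows the same approach as the paper: apply Lemma~\ref{join}, use $\lambda_{\min}(K_t)\geqslant -1\geqslant\lambda_{\min}(G)$ to reduce to the condition $\lambda_{\min}(G)\leqslant\lambda_{\min}(Q)$, and then translate this into the determinant inequality $\det(Q-\lambda_{\min}(G)I)\geqslant 0$. The paper compresses your careful exclusion of the spurious branch $\lambda_{\min}(G)\geqslant\lambda_{\max}(Q)$ into the single phrase ``as $t\geqslant 1$'', but the content is the same.
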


\begin{proof}
As $\lambda_{\min}(K_t) \geqslant -1$, by Lemma \ref{join} we find $\lambda_{\min}(G) = \lambda_{\min}(K_t \nabla G)$ if and only if
\begin{gather*}
\lambda_{\min} \begin{pmatrix}
t-1 & n \\ t& k
\end{pmatrix}
\geqslant \lambda_{\min}(G)
\end{gather*}
if and only if
\begin{gather*}
\det \begin{pmatrix}
t-1 -\lambda_{\min}(G)& n \\ t& k-\lambda_{\min}(G)
\end{pmatrix}
\geqslant 0, \text{ as } t \geqslant 1.
\end{gather*}
This show the lemma.
\end{proof}

\section{Large cliques}\label{sec large cliques}

Let $G$ be a strongly regular graph with parameters $(n,k,\lambda,\mu)$ and smallest eigenvalue $-m$. Let $C$ be a clique of $G$ of order $c$. Then

\begin{equation}\label{maximal clique}
  c \leqslant 1+\frac{k}{m}.
\end{equation}

The inequality (\ref{maximal clique}) is called the \emph{Delsarte bound}. Moreover, if  $c = 1+\frac{k}{m}$, then $C$ is called a \emph{Delsarte clique}. 

Let $H(a,t)$ be the graph with $1+a+t$ vertices, consisting of a complete graph $K_{a+t}$ and a vertex adjacent to exactly $a$ vertices of $K_{a+t}$.

In \cite{GKP2021}, Greaves, Koolen and Park obtained the following lemma.

\begin{lemma}\label{GKPresult1}
Let $G$ be a graph with smallest eigenvalue $\lambda =\lambda_{\min}(G)$. Assume that $G$ contains an induced $H(a,t)$. Then we have

\begin{equation}
  (a-\lambda(\lambda+1))(t-(\lambda+1)^2) \leqslant \lambda(\lambda+1))^2.
\end{equation}

\end{lemma}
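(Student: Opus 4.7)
The plan is to exhibit an equitable partition of $H(a,t)$, take its quotient matrix $Q$, and use the equitable-partition toolkit together with interlacing to convert the hypothesis on $\lambda = \lambda_{\min}(G)$ into a polynomial inequality in $\lambda$, $a$, $t$. Write $v$ for the unique vertex of degree $a$ in $H(a,t)$, let $A$ denote the $a$ neighbours of $v$ inside $K_{a+t}$, and let $T$ be the remaining $t$ vertices of $K_{a+t}$. A direct neighbour count shows that $\pi = \{\{v\}, A, T\}$ is equitable with quotient matrix
\[
Q = \begin{pmatrix} 0 & a & 0 \\ 1 & a-1 & t \\ 0 & a & t-1 \end{pmatrix}.
\]

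By Lemma \ref{quotientmatrix}, every eigenvalue of $Q$ is an eigenvalue of $H(a,t)$; in particular $\lambda_{\min}(H(a,t)) \leqslant \lambda_{\min}(Q)$. Since $H(a,t)$ is (a proper) induced subgraph of $G$, Proposition \ref{subgraph} then yields $\lambda \leqslant \lambda_{\min}(H(a,t)) \leqslant \lambda_{\min}(Q)$. Because the characteristic polynomial $p(x) := \det(xI - Q)$ is a monic cubic and $\lambda$ lies at or below all three of its real roots, each factor $\lambda - \lambda_i(Q)$ is non-positive, so $p(\lambda) \leqslant 0$.

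The remaining step is purely algebraic. Expanding the determinant along the first row gives
\[
p(x) = x^{3} - (a+t-2)x^{2} - (2a+t-1)x + a(t-1),
\]
and a direct multiplication shows that this expression coincides with $(a - x(x+1))(t - (x+1)^{2}) - x^{2}(x+1)^{2}$. Specialising to $x = \lambda$ and rearranging $p(\lambda) \leqslant 0$ then produces the desired inequality $(a - \lambda(\lambda+1))(t - (\lambda+1)^{2}) \leqslant (\lambda(\lambda+1))^{2}$.

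The only step that is not a mechanical application of the results of Section \ref{Pre} is noticing the clean factorisation of $p$; I would treat that as a one-line verification by multiplying out the candidate factorisation and matching coefficients with the cubic above, rather than attempting to guess it from the quotient matrix. Everything else (equitability of $\pi$, eigenvalue containment, the sign of a cubic below its least root) is standard.
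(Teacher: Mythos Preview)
Your argument is correct. The paper does not actually supply a proof of this lemma; it simply quotes the result from Greaves--Koolen--Park \cite{GKP2021}, so there is nothing in the present paper to compare against, and your equitable-partition/quotient-matrix computation is exactly the natural proof (and almost certainly the one in the cited source). Two tiny remarks you could add for completeness: the partition $\{\{v\},A,T\}$ tacitly needs $a\geqslant 1$ and $t\geqslant 1$ for all blocks to be non-empty, and the assertion that $p$ has three \emph{real} roots is justified because, by Lemma~\ref{quotientmatrix}, every eigenvalue of $Q$ is already an eigenvalue of the (symmetric) adjacency matrix of $H(a,t)$.
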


In this paper we need the following consequence of Lemma \ref{GKPresult1}.

\begin{lemma}
Let $G$ be a graph with smallest eigenvalue at least $-3$. Let $C$ be a complete subgraph of $G$ with order $c$. Let $x$ be a vertex of $G$ not in $C$ with exactly $t$ neighbours in $C$. Then $t \leqslant t_{\min}$ or $t \geqslant t_{\max}$ where $t_{\min}$ and $t_{\max}$ are as in the Table $\ref{table}$.

\begin{table}[h]
  \centering
  \begin{tabular}{c|c|c||c|c|c}
  \hline\hline
  $c$ & $t_{\min}$ & $t_{\max}$ & $c$ & $t_{\min}$ & $t_{\max}$\\ \hline
  $29$ & $8$ &$23$ & $31$ & $7$ & $26$ \\ 
  $30$ & $8$ & $24$ & $32$ & $7$ & $27$\\
  \hline
\end{tabular}
  \caption{Values of $t_{\min}$ and $t_{\max}$}\label{table}
\end{table}

\end{lemma}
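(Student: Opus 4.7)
The plan is to apply Lemma \ref{GKPresult1} directly to the induced subgraph of $G$ on $\{x\} \cup C$. If $1 \leqslant t \leqslant c-1$, then this induced subgraph is exactly $H(t, c-t)$ in the notation of the paper: the clique $C$ of order $c$ plays the role of $K_{a+t'}$ with $a = t$ (neighbours of $x$ in $C$) and $t' = c - t$ (non-neighbours of $x$ in $C$), and $x$ is the extra vertex. The boundary cases $t = 0$ and $t = c$ are immediate, since one will see $t_{\min} \geqslant 7$ and $t_{\max} \leqslant 27 < 29 \leqslant c$.

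Since $H(t,\,c-t)$ is an induced subgraph of $G$, Proposition \ref{subgraph} gives $\lambda_{\min}(H(t,\,c-t)) \geqslant \lambda_{\min}(G) \geqslant -3$. Substituting $\lambda = -3$ into the inequality of Lemma \ref{GKPresult1} (so that $\lambda(\lambda+1) = 6$ and $(\lambda+1)^2 = 4$) and inserting the parameters $a = t$, $t' = c - t$ gives
\begin{equation*}
(t - 6)\bigl((c - t) - 4\bigr) \leqslant 36,
\end{equation*}
which rearranges to the quadratic inequality $t^2 - (c+2)\,t + (6c + 12) \geqslant 0$ with roots
\begin{equation*}
r_{\pm} \;=\; \tfrac{1}{2}\Bigl((c+2) \pm \sqrt{c^2 - 20c - 44}\,\Bigr).
\end{equation*}
Hence $t$ must lie outside the open interval $(r_-, r_+)$, so $t \leqslant \lfloor r_- \rfloor$ or $t \geqslant \lceil r_+ \rceil$.

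The remaining step is routine: substitute $c = 29, 30, 31, 32$ and read off the integer bounds. For instance, $c = 30$ gives discriminant $\sqrt{256} = 16$, so $r_\pm \in \{8, 24\}$, recovering $(t_{\min}, t_{\max}) = (8, 24)$; the other three cases involve $\sqrt{217}$, $\sqrt{297}$, $\sqrt{340}$ and yield the remaining rows of Table \ref{table}. I do not expect a genuine obstacle: the argument is essentially a one-line reduction followed by solving one quadratic in four small cases. The only delicate point worth spelling out is that Lemma \ref{GKPresult1} is formally stated with $\lambda = \lambda_{\min}(G)$, whereas we have only $\lambda_{\min}(G) \geqslant -3$. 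This is resolved by noting that the inequality $(a-6)(t'-4) \leqslant 36$ is equivalent to $\lambda_{\min}(H(a,t')) \geqslant -3$ (a fact that one can verify directly from the characteristic polynomial of the $3 \times 3$ quotient matrix of the equitable partition of $H(a,t')$), and this latter condition does follow from $\lambda_{\min}(G) \geqslant -3$ via Proposition \ref{subgraph}.
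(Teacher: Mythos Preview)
Your proof is correct and follows exactly the approach the paper intends: the paper simply states this lemma as ``the following consequence of Lemma \ref{GKPresult1}'' without giving any further argument, and you have supplied precisely that derivation---identifying the induced subgraph on $\{x\}\cup C$ as $H(t,c-t)$, substituting $\lambda=-3$, and solving the resulting quadratic for each $c\in\{29,30,31,32\}$. Your handling of the gap between ``$\lambda_{\min}(G)=-3$'' and ``$\lambda_{\min}(G)\geqslant -3$'' via the equitable $3\times 3$ quotient of $H(a,t')$ is also sound (and indeed the equivalence you assert does hold, since the eigenvalues of $H(a,t')$ orthogonal to the partition are all $-1$).
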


Using Lemma, \ref{GKPresult1} Greaves et al. \cite{GKP2021} derived a method restricting the order of maximal cliques in a strongly regular graph.

\begin{lemma}[{cf. \cite[Lemma 3.7]{GKP2021}}] \label{cubic polynomial}
Let $G$ be a strongly regular graph with parameters $(v,k,\lambda, \mu)$ having smallest eigenvalue $-m$. Let $C$ be a maximal clique of $G$ of order $c$. If $\mu > m(m-1)$ and $c > \frac{{\mu}^2}{\mu-m(m-1)} -m+1$, then
\begin{equation}\label{cubic}
  ((c+m-3)(k-c+1)-2(c-1)(\lambda -c+2))^2 - (k-c+1)^2(c+m-1)(c-(m-1)(4m-1)) \geqslant 0.
\end{equation}
\end{lemma}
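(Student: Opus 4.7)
The plan is to reduce the desired inequality to a one-variable bound on the link numbers $t_y := |N(y) \cap C|$ for $y \notin C$. Applying Lemma \ref{GKPresult1} to the induced subgraph $H(t_y, c - t_y)$ on $C \cup \{y\}$, with $\lambda = -m$ (so that $\lambda(\lambda+1) = m(m-1)$ and $(\lambda+1)^2 = (m-1)^2$), the inequality rearranges to
\begin{equation*}
  t_y^2 - (c+m-1)\, t_y + m(m-1)(c+m-1) \geq 0.
\end{equation*}
The discriminant of this quadratic in $t_y$ is exactly $D := (c+m-1)(c - (m-1)(4m-1))$, precisely the factor appearing in (\ref{cubic}). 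A quick AM-GM application to the identity $\mu^2/(\mu - m(m-1)) = (\mu - m(m-1)) + 2m(m-1) + m^2(m-1)^2/(\mu - m(m-1)) \geq 4m(m-1)$ shows that the hypothesis on $c$ forces $c > (m-1)(4m-1)$, so $D > 0$ and the quadratic has real roots $t_\pm = \tfrac{1}{2}\bigl((c+m-1) \pm \sqrt{D}\bigr)$; each $t_y$ is then forced into $[0, t_-] \cup [t_+, c-1]$.

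Next I will use the maximality of $C$ to pin down on which side every $t_y$ lies. Since $C \cup \{y\}$ is not a clique, some $z \in C$ is not adjacent to $y$; as $N(z) \cap C = C \setminus \{z\} \supseteq N(y) \cap C$, every neighbour of $y$ in $C$ is a common neighbour of $y$ and $z$, and hence $t_y \leq |N(y) \cap N(z)| = \mu$. Evaluating the quadratic above at $t = \mu$ gives $\mu^2 - (c+m-1)(\mu - m(m-1))$, and the condition $c > \mu^2/(\mu - m(m-1)) - m + 1$ is precisely the statement that this value is negative, i.e.\ $t_- < \mu < t_+$. Combined with $t_y \leq \mu$, this collapses the dichotomy to the uniform bound $t_y \leq t_-$ for every $y \notin C$.

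With $t_y \leq t_-$ in hand, the proof concludes by averaging. Writing $X := k - c + 1$ and $Y := \lambda - c + 2$, two standard clique counts give
\begin{equation*}
  \sum_{y \notin C} t_y = c X, \qquad \sum_{y \notin C} t_y(t_y - 1) = c(c-1)\, Y,
\end{equation*}
the first from counting edges leaving $C$ and the second from counting pairs in $C$ with a common outside neighbour. Because $t_y \geq 0$ and $t_y \leq t_-$, we have $t_y(t_- - t_y) \geq 0$, so summing yields $t_-\, cX \geq cX + c(c-1) Y$. Substituting $t_- = \tfrac{1}{2}\bigl((c+m-1) - \sqrt{D}\bigr)$ and simplifying gives
\begin{equation*}
  (c+m-3)\, X - 2(c-1)\, Y \geq X\, \sqrt{D}.
\end{equation*}
The right-hand side is nonnegative, so the left-hand side is too, and squaring both sides yields exactly (\ref{cubic}).

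The main obstacle in this plan is the pair of algebraic identifications on which everything hinges: first, that Lemma \ref{GKPresult1} truly simplifies to the clean quadratic in $t_y$ whose discriminant is the desired $(c+m-1)(c - (m-1)(4m-1))$; and second, that the somewhat opaque hypothesis $c > \mu^2/(\mu - m(m-1)) - m + 1$ is equivalent to $\mu$ lying strictly between the two roots $t_\pm$. Once these are verified, the $t_y \leq \mu$ observation from maximality, the two counting identities, and the final squaring are all routine.
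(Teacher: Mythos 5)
Your proof is correct: Lemma \ref{GKPresult1} applied to the induced $H(t_y,\,c-t_y)$ with $\lambda=-m$ does simplify to $t_y^2-(c+m-1)t_y+m(m-1)(c+m-1)\geqslant 0$ with discriminant $(c+m-1)(c-(m-1)(4m-1))$; the hypothesis on $c$ is precisely the statement that this quadratic is negative at $t_y=\mu$, so maximality (which gives $t_y\leqslant\mu$ via a non-neighbour $z\in C$) forces $t_y\leqslant t_-$ for every outside vertex; and the two counting identities $\sum t_y=c(k-c+1)$, $\sum t_y(t_y-1)=c(c-1)(\lambda-c+2)$ together with $t_y(t_--t_y)\geqslant 0$ and one legitimate squaring (both sides nonnegative, since $k-c+1\geqslant 0$) yield exactly (\ref{cubic}). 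Note that the paper itself contains no proof of this lemma---it is quoted from \cite[Lemma 3.7]{GKP2021}---and your argument is essentially the one from that source, built on the same ingredient (Lemma \ref{GKPresult1}) that the paper records.
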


We denote the polynomial on the left hand side of the inequality (\ref{cubic}) by $M_{G}(c)$. 

\begin{lemma}\label{clique}
If a strongly regular graph $G$ with parameters $(1911, 270, 105, 27)$ exists, then any clique in $G$ has order at most $32$.
\end{lemma}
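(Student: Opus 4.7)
My plan is to assume for contradiction that $G$ contains a clique of order at least $33$, extend it to a maximal clique $C$ of order $c$, and apply Lemma~\ref{cubic polynomial} with $(k,\lambda,m)=(270,105,3)$. The Delsarte bound (\ref{maximal clique}) forces $c\leq 1+270/3=91$, and the numerical hypothesis of Lemma~\ref{cubic polynomial} reduces to $c>729/21-2\approx 32.71$, which holds whenever $c\geq 33$. Consequently I obtain $M_{G}(c)\geq 0$ for some integer $c\in[33,91]$.

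The substantive step is then to refute this by computing $M_{G}(c)$ explicitly. Plugging in the parameters, the inner expression simplifies to $(c+m-3)(k-c+1)-2(c-1)(\lambda-c+2)=c^{2}+55c+214$, and the full $M_{G}(c)$ becomes a cubic in $c$ (the $c^{4}$ contributions cancel). A direct factorisation gives
\begin{equation*}
M_{G}(c)=48(c-25)(14c^{2}-1333c-2731),
\end{equation*}
and the quadratic factor has discriminant $1333^{2}+4\cdot 14\cdot 2731=1929825$, hence real roots near $-2.00$ and $97.22$; both lie outside the interval $[33,91]$, and the larger root strictly exceeds $91$ since $1215^{2}=1476225<1929825$. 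Thus on $[33,91]$ the quadratic factor is strictly negative while $c-25>0$, so $M_{G}(c)<0$ throughout $[33,91]$, contradicting $M_{G}(c)\geq 0$.

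The only real work is the arithmetic: expanding the given expression for $M_{G}(c)$, noticing that $c=25$ is a rational root (which seems to be a fortunate algebraic feature of these particular parameters and is easy to verify by a single substitution), and then locating the roots of the remaining quadratic. No further graph-theoretic information about $G$ beyond Lemma~\ref{cubic polynomial} and the Delsarte bound enters the proof, so the main potential obstacle is purely computational and is avoided once the clean factorization is found.
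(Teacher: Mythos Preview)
Your proof is correct and follows essentially the same approach as the paper: both invoke Lemma~\ref{cubic polynomial} together with the Delsarte bound to derive a contradiction, and your cubic $672c^{3}-80784c^{2}+1468512c+3277200$ coincides with the paper's $M_{G}(c)$. The only difference is cosmetic: the paper verifies that $M_{G}(c)<0$ on the relevant range by evaluating at $c=26$ and $c=97$, whereas you obtain the clean factorisation $M_{G}(c)=48(c-25)(14c^{2}-1333c-2731)$ and locate the roots explicitly, which is arguably a tidier justification.
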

\begin{proof}
Let $G$ be a strongly regular graph with parameters $(1911, 270, 105, 27)$. Then, it has smallest eigenvalue $-3$. Let $C$ be a maximal clique in $G$ of order $c$. If $c > \frac{27^2}{27-6} -3+1 = 32\frac{5}{7}$, then by Lemma \ref{clique}, we have
\begin{equation*}
  M_{G}(c) = 672c^3 - 80784c^2 +1468512c + 3277200 \geq 0
\end{equation*}
as $\mu = 27 > 6$. It is easily checked that $M_{G}(26) < 0$ and $M_{G}(97) < 0$. This means that $c \geqslant 98$. This gives a contradiction, as the Delsarte bound gives $c \leqslant 1+\frac{k}{m} = 1 + \frac{270}{3} =91$.  So we obtain that any clique has order at most $32$.
\end{proof}

\begin{lemma}\label{27 clique} Let $G$ be strongly regular graph with parameters $(1911, 270, 105, 27)$. Assume there are two cliques $C_1$ and $C_2$ such that $V(C_1) \neq V(C_2)$, each of order at least $29$, intersecting in at least $22$ vertices. Then one of the following holds:
\begin{enumerate}
  \item[(1)] There is at least one vertex $z$ in the symmetric difference $V(C_1)\triangledown V(C_2)$ which is adjacent to all vertices in $V(C_1)\triangledown V(C_2)\backslash \{z\}$,
  \item[(2)] $C_1$ and $C_2$ intersect in exactly $27$ vertices and both are maximal with order $29$.
\end{enumerate}
\end{lemma}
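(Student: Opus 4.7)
The plan is to argue by contradiction: assume (1) fails and deduce (2). Set $I = V(C_1) \cap V(C_2)$, $A = V(C_1) \setminus I$, $B = V(C_2) \setminus I$, with sizes $i, a, b$; then $V(C_1) \triangle V(C_2) = A \cup B$, and since $A, B$ are cliques (as subsets of $V(C_1), V(C_2)$), failure of (1) means every $x \in A$ has at least one non-neighbour in $B$, and vice versa. By Lemma~\ref{clique} we have $c_j := |V(C_j)| \in \{29, \ldots, 32\}$.

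For every non-adjacent pair $xy$ with $x \in A$, $y \in B$, both $x$ and $y$ are adjacent to every vertex of $I$ (clique property), so $I \subseteq N(x) \cap N(y)$ and $i \leqslant |N(x) \cap N(y)| = \mu = 27$. Decomposing $N(x) \cap N(y)$ into its intersections with $I$, $A$, $B$, and the complement of $V(C_1) \cup V(C_2)$, we obtain the crucial non-edge inequality
\[
d^A_y + d^B_x \leqslant 27 - i, \qquad d^A_y := |N(y) \cap A|,\; d^B_x := |N(x) \cap B|.
\]
Lemma~\ref{GKPresult1} (encoded in Table~\ref{table}) applied to $x \notin V(C_2)$ forces $i + d^B_x \geqslant t_{\max}(c_2)$, since $i \geqslant 22$ rules out the $t_{\min}$-branch; the symmetric bound holds for $y$.

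In the case $i = 27$ the non-edge inequality degenerates to $d^A_y = d^B_x = 0$, so (since each $x \in A$ has a non-neighbour in $B$) every $x \in A$ has no neighbour in $B$ at all. Thus the induced subgraph on $V(C_1) \cup V(C_2)$ is $K_{27} \nabla (K_a + K_b)$, equitable with respect to $\{I, A, B\}$; a direct computation of the quotient matrix $Q$ shows $\lambda_{\min}(Q) < -3$ for every admissible $(a,b) \in \{2,\ldots,5\}^2$ except $(a,b) = (2,2)$, so by Lemma~\ref{quotientmatrix} and Proposition~\ref{subgraph} we must have $c_1 = c_2 = 29$. For maximality, suppose $v$ extends $V(C_1)$ to a clique of order $30$. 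Then $v \notin V(C_2)$ (otherwise $v$ adjacent to $A$ would force $d^B_x \geqslant 1$), and the equality $N(x) \cap N(y) = I$ (forced by $|N(x) \cap N(y)| = 27 = i$) for $x \in A$, $y \in B$ prevents $v$ from being adjacent to any $y \in B$. Hence the induced subgraph on $V(C_1) \cup V(C_2) \cup \{v\}$ is $K_{27} \nabla (K_3 + K_2)$, whose quotient matrix has $\lambda_{\min} < -3$ by the same computation, contradicting $\lambda_{\min}(G) \geqslant -3$. By symmetry $C_2$ is maximal as well, giving (2).

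The remaining case $22 \leqslant i \leqslant 26$ is the main obstacle. The strategy is to play the spectral and combinatorial constraints against each other. Let $E$ denote the number of $A$--$B$ edges. Using the (in general non-equitable) partition $\{I, A, B\}$, an interlacing argument applied to the averaged quotient matrix forces a lower bound $E \geqslant E_0(c_1, c_2, i)$ from the hypothesis $\lambda_{\min}(G) \geqslant -3$. On the other hand, the non-edge inequality combined with $d^B_x \in [t_{\max}(c_2) - i,\, b - 1]$ and $d^A_y \in [t_{\max}(c_1) - i,\, a - 1]$ (the upper bounds coming from (1) failing) sharply restricts admissible degree sequences: for instance, $c_1 = 32$ (or $c_2 = 32$) immediately forces $d^B_x \leqslant 27 - t_{\max}(c_1) = 0$, hence $E = 0 < E_0$, a contradiction. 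I expect the chief difficulty to lie in the finite but delicate case analysis over the remaining triples $(c_1, c_2, i) \in \{29, 30, 31\}^2 \times \{22, \ldots, 26\}$, showing in each that the only degree sequences consistent with the non-edge bound either drive $E$ below $E_0$ or compel some vertex of $A \cup B$ to be adjacent to all others in $A \cup B$, contradicting the failure of (1).
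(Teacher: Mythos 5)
Your treatment of the case $i = 27$ is correct and matches the paper's: there, failure of (1) plus $\mu = 27$ forces the cross-bipartite graph between $A$ and $B$ to be empty (any cross edge $xy'$ would give a non-adjacent pair $x,y$ with $28$ common neighbours), the partition $\{I,A,B\}$ becomes equitable with the same $3\times 3$ quotient matrix the paper writes down, and the determinant condition $\det(Q+3\mathbf{I})\geqslant 0$, i.e.\ $-25t_1t_2+4(t_1+t_2)+116\geqslant 0$, kills every $(t_1,t_2)$ with $t_1\geqslant 3$, $t_2\geqslant 2$ — in fact your extension argument for maximality (adjoining $v$ yields the forbidden $(t_1,t_2)=(3,2)$ configuration) is spelled out more carefully than in the paper, which buries maximality in the hypothesis ``$|V(C_1)|\geqslant 30$ and $|V(C_2)|\geqslant 29$''. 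Your machinery for the remaining cases is also the right machinery: the non-edge inequality $i + d^A_y + d^B_x \leqslant \mu = 27$ and the $t_{\max}$ bounds from Table \ref{table} are exactly the $\mu$-counting and $H(a,t)$ constraints the paper uses, and your elimination of $c_j = 32$ via $d^B_x \leqslant 27 - t_{\max} = 0$ is sound.

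However, there is a genuine gap: for $22 \leqslant i \leqslant 26$ — the bulk of the lemma — you do not give a proof, only a plan. The asserted spectral lower bound $E \geqslant E_0(c_1,c_2,i)$ is never computed, and you explicitly defer the ``finite but delicate case analysis'' over the triples $(c_1,c_2,i)$ without verifying a single nontrivial instance, so no contradiction is actually derived in, say, $(c_1,c_2,i)=(29,29,22)$, where the Table bounds alone do not clash with the non-edge inequality ($d^B_x, d^A_y \geqslant 1$ is compatible with $d^A_y + d^B_x \leqslant 5$). Compare with the paper: it first reduces to subcliques $C_1', C_2'$ of order exactly $29$ containing the intersection (which collapses your $45$ triples to the five values of $i$ and makes the quotient computation uniform), then for $i=22$ extracts $\geqslant 27$ cross edges from $\det(Q+3\mathbf{I})\geqslant 0$ and closes the case not by an edge-count comparison but by locating a non-adjacent pair at distance two with $\geqslant 28$ (resp.\ $\geqslant 30$) common neighbours, violating $\mu = 27$. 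Your $E$-versus-$E_0$ scheme is plausibly equivalent and likely fillable, but as written the proof stops exactly where the lemma's real content begins; you would need either to carry out the reduction to order-$29$ subcliques and redo the paper-style counting for each $i \in \{22,\ldots,26\}$, or to compute $E_0$ explicitly and check the clash in every remaining triple. (A minor shared looseness: the averaged-quotient-matrix interlacing you invoke for a non-equitable partition needs Haemers' generalization, not Lemma \ref{quotientmatrix} as stated — the paper glosses this point too.)
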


\begin{proof}

Let $H$ be the induced subgraph on $V(C_1)\triangledown V(C_2)$. If $H$ is complete, then we are in Case (1). So we may assume $H$ is not complete. This means that $|V(C_1)\cap V(C_2)|=: t \in \{22, 23, \ldots, 27\}$ as $\mu = 27$.

Assume $t=22$. Let $C'_1$ (resp. $C'_2$) be a sub clique of $C_1$ (resp. $C_2$) such that $V(C'_1) \supseteq V(C_1)\cap V(C_2)$, $V(C'_2) \supseteq V(C_1)\cap V(C_2)$ and $|V(C'_1)|=|V(C'_2)|=29$. Let $K$ be the induced subgraph on $V(C'_1)\cup V(C'_2)$. By Proposition \ref{interlacing} we see that $K$ has smallest eigenvalue at least $-3$. Let $\pi =\{V(C'_1) \cap V(C'_2), V(C'_1)\triangledown V(C'_2)\}$ of $V(C'_1)\cup V(C'_2)$ be a partition of $K$ with quotient matrix
\begin{gather*}
  Q= \begin{pmatrix}
21 & 14 \\ 22 & \alpha +6
\end{pmatrix}.
\end{gather*}

By Lemma \ref{quotientmatrix}, we see that the smallest eigenvalue of $Q$ is at least $-3$. This implies that $24\alpha \geqslant 92$, as $\det(Q+3\mathbf{I})\geqslant 0$. So, $\alpha \geqslant \frac{23}{6}$. This means that there are at least $\lceil \frac{7 \times 23}{6}\rceil =27$ edges  between $V(C'_1) \backslash V(C'_2)$ and $V(C'_2) \backslash V(C'_1)$. If Case (1) of the lemma does not happen, then all vertices $V(C'_1)\backslash V(C'_2)$ have at most 5 neighbours in $V(C'_2)\backslash  V(C'_1)$, as $\mu=27$.

We need to consider two cases. There exist a vertex $x \in V(C'_1)\backslash V(C'_2)$ such that $x$ has $5$ neighbours in $V(C'_2)\backslash V(C'_1)$ or there is no vertex $x \in V(C'_1)\backslash V(C'_2)$  with $5$ neighbours in $V(C'_2)\backslash V(C'_1)$.

In the first case, let $y_1, y_2, \ldots, y_5$ be the $5$ neighbours of $x$ in $V(C'_2)\backslash V(C'_1)$. Then $y_1, y_2, \ldots, y_5$ have each at most $5$ neighbours in $V(C'_1)\backslash V(C'_2)$ and hence there is an edge $zu$ between $V(C'_1)\backslash V(C'_2)$ and $V(C'_2)\backslash V(C'_1)$, such that $z \in (V(C'_1)\backslash V(C'_2))\backslash \{x\}$ and $u \in (V(C'_2)\backslash V(C'_1))\backslash \{y_1, y_2, \ldots, y_5\}$. Then $u$ and $x$ are at distance two and they have at least $28$ common neighbours, a contradiction. Now assume that all vertices of $V(C'_1)\backslash V(C'_2)$ (resp. $V(C'_2)\backslash  V(C'_1)$) have at most 4 neighbours in $V(C'_2)\backslash  V(C'_1)$ (resp. $V(C'_1)\backslash V(C'_2)$). As there are at least $27$ edges  between $V(C'_1) \backslash V(C'_2)$ and $V(C'_2) \backslash V(C'_1)$, then there exist $x \in V(C'_1)\backslash V(C'_2)$ and $y \in V(C'_2)\backslash V(C'_1)$ such that $x$ and $y$ are at distance two and they have at least $8+22 = 30$ common neighbours, a contradiction. This shows that, if $t=22$, then we are in Case (1) of the lemma.

In similar fashion, it can be shown that, if $t \in \{23, 23, \ldots, 26\}$, then we are in Case (1) of the lemma.

Now assume $t=27$. If $|V(C_1)|  \geqslant 30$ and $|V(C_2)|  \geqslant 29$, then the quotient matrix $Q$ of $\pi = \{V(C_1) \cap V(C_2), V(C_1) \backslash (V(C_2), V(C_2) \backslash (V(C_1)\}$ satisfies
 \begin{gather*}
 \begin{pmatrix}
26 & t_1 & t_2 \\ 27 & t_1-1 & 0 \\ 27 & 0 & t_2-1
\end{pmatrix},
\text{ where } t_1 +27 =|V(C_1)| \text{ and } t_2+27 = |V(C_2)|,
\end{gather*}
or we are in Case (1) of the lemma.

As the smallest eigenvalue of $Q$ is at least $-3$ we obtain that
\begin{equation*}
  29(t_1+2)(t_2+2)-27(t_1(t_2+2)+ t_2(t_1+2)) \geqslant 0
\end{equation*}
This means
\begin{equation*}
  -25t_1t_2 + 4(t_1+t_2)+ 116 \geqslant 0,
\end{equation*}
and hence
\begin{equation*}
  25(t_1 - \frac{4}{25})(t_2 - \frac{4}{25}) < 117
\end{equation*}
But, as $t_1 \geqslant 3$ and $t_2 \geqslant 2$ we have $25(t_1 - \frac{4}{25})(t_2 - \frac{4}{25}) > 130$, a contradiction. This shows the lemma.
\end{proof}

\section{On the local graph of $G$}\label{sec on the local graph of $G$}

For a graph $G$ and $x \in V(G)$ let $\Delta(x)$ be the induced subgraph on $\{y \in V(G) \mid x \sim y\}$. The graph $\Delta(x)$ is called the local graph of $G$ with respect to $x$.

\begin{lemma} [{cf. \cite{Gavrilyuk10,KP10}}]\label{coclique} 
Let $G$ be a primitive strongly regular graph with parameters $(v,k,\lambda, \mu)$. Let $x$ be a vertex of $G$ and $\Delta(x)$ be the local graph of $G$ with respect to $x$. Let $\bar{C} = \{y_1,y_2, \ldots, y_c\}$ be an independent set of $\Delta(x)$ of order $\bar{c}$. Then
\begin{equation}\label{order of co-clique}
  \binom{\bar{c}}{2}(\mu-1) \geqslant \bar{c}(\lambda+1)-k
\end{equation}
 holds. 
\end{lemma}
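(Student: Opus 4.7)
The plan is to produce two bounds on the size of the auxiliary set
\begin{equation*}
Y := \bigcup_{i=1}^{\bar c} \bigl(N(y_i)\cap \Delta(x)\bigr)
\end{equation*}
and then to compare them. For the upper bound I would note that $\bar C$ is an independent set contained in $\Delta(x)$, so no $y_j$ is a neighbour of any $y_i$ (and $y_i \notin N(y_i)$), which forces $Y \subseteq \Delta(x)\setminus \bar C$ and hence $|Y|\leq k-\bar c$.

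For the lower bound I would apply Bonferroni's inequality, i.e.\ the two-term truncation of inclusion--exclusion, after computing the two ingredients. Since each $y_i$ is adjacent to $x$, we have $|N(y_i)\cap \Delta(x)|=|N(y_i)\cap N(x)|=\lambda$. For $i\neq j$ the vertices $y_i,y_j$ are non-adjacent (both lying in the independent set $\bar C$), so $|N(y_i)\cap N(y_j)|=\mu$; since $x$ is a common neighbour of $y_i$ and $y_j$ but $x\notin \Delta(x)$, this gives $|N(y_i)\cap N(y_j)\cap \Delta(x)|\leq \mu-1$. Bonferroni then yields
\begin{equation*}
|Y|\;\geq\; \sum_{i=1}^{\bar c}|N(y_i)\cap \Delta(x)|\;-\;\sum_{i<j}|N(y_i)\cap N(y_j)\cap \Delta(x)|\;\geq\; \bar c\,\lambda-\binom{\bar c}{2}(\mu-1).
\end{equation*}

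Combining the two bounds gives $\bar c\lambda-\binom{\bar c}{2}(\mu-1)\leq k-\bar c$, which rearranges to the desired inequality $\binom{\bar c}{2}(\mu-1)\geq \bar c(\lambda+1)-k$. There is essentially no obstacle: the only thing one must verify is that the two-term Bonferroni bound is valid (it always is). Should this estimate ever be too crude, one could instead introduce $d_z:=|N(z)\cap \bar C|$ for $z\in \Delta(x)\setminus \bar C$, use that $\sum_z d_z=\bar c\lambda$ and $\sum_z\binom{d_z}{2}\leq \binom{\bar c}{2}(\mu-1)$, and apply convexity, but for the stated inequality the simple union bound already suffices.
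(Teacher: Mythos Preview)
Your argument is correct. The paper itself does not supply a proof of this lemma; it is stated with a ``cf.'' citation to \cite{Gavrilyuk10,KP10} and used as a known fact. So there is no in-paper proof to compare against, and your write-up actually fills that gap.

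A couple of small remarks. First, the chain of inequalities in your Bonferroni step has two pieces going the same way, and it is worth being explicit about that: from $|Y|\ge \sum_i|A_i|-\sum_{i<j}|A_i\cap A_j|$ and $|A_i\cap A_j|\le \mu-1$ you get $|Y|\ge \bar c\lambda-\binom{\bar c}{2}(\mu-1)$, which together with $|Y|\le k-\bar c$ gives the claim. Second, your ``alternative'' with $d_z=|N(z)\cap\bar C|$ for $z\in\Delta(x)\setminus\bar C$ is in fact the same computation rewritten: Bonferroni at the level of indicator functions is exactly $\mathbf{1}_{d_z\ge 1}\ge d_z-\binom{d_z}{2}$, so summing over $z$ reproduces your bound. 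The paper later uses the equality case of this lemma (to force $c(u_i,u_j)=\mu-1$ for all pairs in a maximum coclique), and your $d_z$-formulation makes that equality analysis transparent: equality forces every $z$ to have $d_z\le 2$ and every pair $\{y_i,y_j\}$ to attain exactly $\mu-1$ common neighbours inside $\Delta(x)$.
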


For distinct non-adjacent vertices $w_1,w_2 \in \Delta(x)$ define $C(w_1,w_2) := \{z \sim x \mid z \sim w_1, z \sim w_2\}$ and $c(w_1,w_2) := \text{number of elements of } C(w_1,w_2)$.

\begin{lemma}
Assume a strongly regular graph $G$ with parameters $(1911, 270, 105, 27)$ exists such that $G$ has an induced quadrangle, say $x\sim u\sim y \sim v \sim x$. Then there is no independent set $S$ of order $5$ inside $\Delta(x)$ such that $u,v \in S$.
\end{lemma}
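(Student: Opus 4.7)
The plan is to apply Lemma \ref{coclique} to $S$, notice that for these parameters the inequality (\ref{order of co-clique}) is in fact an equality at $\bar{c}=5$, then push the tight case far enough to collide with the extra common neighbour $y$ supplied by the quadrangle.

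First I would check the numerical coincidence. With $\bar{c}=5$, $\lambda=105$, $k=270$ and $\mu=27$ one has $\binom{5}{2}(\mu-1)=260$ and $\bar{c}(\lambda+1)-k=260$, so any independent set of order $5$ inside $\Delta(x)$ saturates Lemma \ref{coclique}.

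Next I would redo the standard double-counting behind Lemma \ref{coclique} in order to extract what equality forces. For each $z\in\Delta(x)\setminus S$, let $d_z$ denote the number of neighbours of $z$ in $S$. Counting edges between $S$ and its complement in $\Delta(x)$ gives $\sum_z d_z = 5\lambda = 525$. Counting $2$-paths through $z$ with endpoints in $S$ gives $\sum_z\binom{d_z}{2}=\sum_{\{p,q\}\subseteq S}p_{pq}$, where $p_{pq}$ is the number of common neighbours of $p$ and $q$ lying in $\Delta(x)$; since $x$ itself is always a common neighbour of any two vertices of $S$ in $G$ and $x\notin\Delta(x)$, we have $p_{pq}\leqslant\mu-1=26$. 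Combining these identities via the elementary inequality $d\leqslant\binom{d}{2}+1$ for $d\geqslant 1$ together with $|\Delta(x)\setminus S|=265$ gives
\[
525 \;=\; \sum_z d_z \;\leqslant\; \sum_z \binom{d_z}{2} + |\{z : d_z\geqslant 1\}| \;\leqslant\; 260+265 \;=\; 525,
\]
so every intermediate inequality must be an equality. The third inequality being tight forces $p_{pq}=26$ for every pair $\{p,q\}\subseteq S$.

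Finally I would specialise this to the pair $\{u,v\}\subseteq S$. In $G$ they have exactly $\mu=27$ common neighbours, but the quadrangle $x\sim u\sim y\sim v\sim x$ exhibits two of them outside $\Delta(x)$: the vertex $x$ itself (common neighbour because $u,v\in\Delta(x)$) and the vertex $y$ (a common neighbour with $y\not\sim x$). Hence at most $27-2=25$ of the common neighbours of $u,v$ lie in $\Delta(x)$, contradicting the forced value $p_{uv}=26$. The only real step to worry about is the equality analysis of Lemma \ref{coclique}; once $p_{uv}=26$ is extracted, the quadrangle produces the contradiction immediately.
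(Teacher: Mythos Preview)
Your proof is correct and follows essentially the same approach as the paper's: both observe that an independent $5$-set in $\Delta(x)$ forces equality in Lemma~\ref{coclique}, which in turn forces $c(p,q)=26$ in $\Delta(x)$ for every pair, contradicting $c(u,v)\leqslant 25$ coming from the extra common neighbour $y$. The only difference is that you explicitly unpack the double-counting argument to justify why equality pins every $p_{pq}$ at $26$, whereas the paper simply asserts this consequence of tightness; your version is more self-contained but not a different method.
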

\begin{proof}
Assume that $\Delta(x)$ contains an independent set $S=\{u_1,u_2, \ldots, u_5\}$ of order $5$. Then by Lemma \ref{coclique} we have $260 =\binom{5}{2}26= 5\times106 -270 =260$. So we have equality in (\ref{order of co-clique}). This means that $c(u_i,u_j) =26$ in $\Delta(x)$ for all $1 \leqslant i < j \leqslant 5$. As $C(u,v) \leqslant 25$ in $\Delta(x)$, we obtain that $u$ and $v$ are not both elements in an independent set $S$ of order $5$ in $\Delta(x)$. This shows the lemma
\end{proof}

\begin{lemma}\label{number of common neighbours in W}
Assume a strongly regular graph $G$ with parameters $(1911, 270, 105, 27)$ exists such that $G$ has an induced quadrangle, say $x\sim u\sim y \sim v \sim x$. Let $U= \{u,v, w_1,w_2\}$ be an independent set of $\Delta(x)$. Let $A_i =\{a_{2i-1}, a_{21}\}$ for $i = 1,2, \ldots, 6$ such that $A_i \in  \binom{U}{2}$, $A_i \neq A_j$ if $1 \leqslant i < j \leqslant 6$ and $A_1 =\{u,v\}$.  Then $c(u,v) \in \{24,25\}$ and $\sum_{\{u_1,u_2\} \in \binom{U}{2}} c(u_1,u_2) \in \{154, 155\}$. Then one of the following hold:

\begin{enumerate}
   \item[(1)] $c(u,v) \in \{24,25\}$ and $\sum_{\{u_1,u_2\} \in \binom{U}{2}} c(u_1,u_2) =154$. Then, without loss of generality,
   \begin{equation*}
     (c(a_1,a_2),c(a_3,a_4), \ldots, c(a_{11},a_{12})) \in \{(24, 26, 26, \ldots, 26), (25, 25, 26, \ldots, 26)\}.
   \end{equation*}
    Moreover, any vertex $w$ of $\Delta(x)$ has at most $2$ neighbours in $U$.
   \item[(2)] $c(u,v) = 25$  and $\sum_{\{u_1,u_2\} \in \binom{U}{2}} c(u_1,u_2) = 155$. Then
   \begin{equation*}
     (c(a_1,a_2),c(a_3,a_4), \ldots, c(a_{11},a_{12})) = (25, 26, 26, \ldots, 26).
   \end{equation*}
  In this case there is a unique vertex $z$ of $\Delta(x)$ with exactly $3$ neighbours in $U$ and any other vertex $w$ of $\Delta(x)$ has at most $2$ neighbours in $U$.
\end{enumerate}
\end{lemma}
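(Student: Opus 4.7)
The plan is to apply Lemma \ref{coclique} to the independent set $U$ inside $\Delta(x)$, combined with the structural restrictions coming from the induced quadrangle. Since $\Delta(x)$ is $\lambda$-regular on $k=270$ vertices, each closed neighborhood $N[y]$ of $y \in U$ inside $\Delta(x)$ has size $\lambda + 1 = 106$. For $z \in \Delta(x)\setminus U$, write $d_z$ for the number of neighbours of $z$ in $U$. The standard double count gives
\begin{equation*}
\sum_{y\in U}|N[y]| - \Bigl|\bigcup_{y \in U} N[y]\Bigr| = \sum_{z\notin U}(d_z - 1)_+,
\end{equation*}
and together with $\bigl|\bigcup_{y\in U} N[y]\bigr| \leq k = 270$ this yields
\begin{equation*}
\sum_{\{y,y'\}\in \binom{U}{2}}c(y,y') \;\geq\; \sum_{z\notin U}(d_z-1)_+ \;\geq\; 4\cdot 106 - 270 = 154.
\end{equation*}
For the upper bound, each $c(y,y')$ is at most $\mu - 1 = 26$ since $x$ is always a common neighbour of $y, y'$ lying outside $\Delta(x)$, while $c(u,v) \leq 25$ because the induced quadrangle provides the second common neighbour $y$ also outside $\Delta(x)$. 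Summing over the six pairs yields $\sum c \leq 25 + 5\cdot 26 = 155$, so $\sum c \in \{154,155\}$; using $c(u,v) \leq 25$ and the bounds on the other five pairs, this forces $c(u,v) \in \{24,25\}$ and exactly the two tuple profiles listed in (1) and (2).

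For the statement about the number of neighbours of a vertex in $U$, I will use the elementary identity $\binom{d}{2} - (d - 1)_+ = \binom{d-1}{2}$, valid for $d \geq 1$, which evaluates to $0$ for $d \in \{1,2\}$, to $1$ for $d = 3$, and to $3$ for $d = 4$. Summing over $z \in \Delta(x) \setminus U$ and combining with $\sum_z \binom{d_z}{2} = \sum c$ and $\sum_z (d_z-1)_+ \geq 154$ gives
\begin{equation*}
\sum_{z\notin U}\binom{d_z - 1}{2} \;\leq\; \sum c - 154.
\end{equation*}
In case (1), $\sum c = 154$ forces every term to vanish, so $d_z \leq 2$ for all $z$. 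In case (2), $\sum c = 155$ allows at most one vertex with $d_z = 3$ and none with $d_z \geq 4$.

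The most delicate step is upgrading the "at most one" in case (2) to the claimed existence of a vertex with $d_z = 3$. The inequalities a priori allow $n_3 := |\{z : d_z = 3\}|$ to be zero; but then $n_4 = 0$ together with $\sum_z \binom{d_z}{2} = 155$ forces $n_2 = 155$, while $\sum_z d_z = 4\cdot 105 = 420$ gives $n_1 = 110$. Since $|\Delta(x)\setminus U| = 266$, exactly one vertex $z_0 \in \Delta(x)\setminus U$ would then have no neighbour in $U$. But then $\{z_0, u, v, w_1, w_2\}$ would be an independent set of order $5$ in $\Delta(x)$ containing both $u$ and $v$, contradicting the lemma immediately preceding this one. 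Hence $n_3 = 1$ in case (2), completing the proof plan.
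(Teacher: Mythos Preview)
Your proof is correct and follows the same double-counting approach as the paper: bound $\sum c$ from below via inclusion--exclusion on the neighbourhoods of $U$ in $\Delta(x)$, from above via $\mu-1$ and $\mu-2$, and then read off the degree distribution into $U$. Your handling of case~(2) is in fact more careful than the paper's own proof: you explicitly rule out $n_3=0$ by the counting $n_2=155$, $n_1=110$, $n_0=1$ and then invoke the preceding lemma forbidding an independent $5$-set containing $u,v$, whereas the paper simply asserts the existence of the degree-$3$ vertex.
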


\begin{proof}
As $k= 270$ and $\lambda = 105$, we have $4(\lambda +1)-k = 154$. This means that the number of vertices $w \in \Delta(x)$ such that $w$ is adjacent to at least two vertices in $U$ is $154$, by Lemma \ref{coclique}. This implies \begin{equation*}
  \sum_{\{u_1,u_2\} \in \binom{U}{2}} c(u_1,u_2)  \geqslant 154.
\end{equation*}

As $c(u,v) \leqslant 25$ and $c(u_1,u_2) \leqslant 26$ for all $\{u_1,u_2\} \in \binom{U}{2}$ we find that
\begin{equation*}
  154 \leqslant \sum_{\{u_1,u_2\} \in \binom{U}{2}} c(u_1,u_2) \leqslant 155.
\end{equation*}
We also find that $c(u,v) \geq 154 - 5 \times 26 = 24$.

If $c(u,v) =24$, then $c(a_{2i-1},a_{2i})=26$ for $i = 2,3, \ldots, 6$ and any vertex $w$ in $\Delta(x)$ has at most two neighbours in $U$.

If $c(u,v) = 25$  and $\sum_{\{u_1,u_2\} \in \binom{U}{2}} c(u_1,u_2) = 154$, then there exists at most one $i$  in $\{2,3, \ldots, 6\}$ such that $c(a_{2i-1},a_{2i}) = 25$ and for the other $i$'s in $\{2,3, \ldots, 6\}$ we have $c(a_{2i-1},a_{2i}) = 26$. 

If $c(u,v) = 25$  and $\sum_{\{u_1,u_2\} \in \binom{U}{2}} c(u_1,u_2) = 155$, then $c(a_{2i-1},a_{2i})=26$ for all $i = 2,3, \ldots, 6$ and there exists a unique vertex $z$ of $\Delta(x)$ such that $z$ has exactly $3$ neighbours in $U$. Any other vertex $w$ of $\Delta(x)$ has at most $2$ neighbours in $U$. This shows the lemma.
\end{proof}

As a consequence of Lemma \ref{number of common neighbours in W} we have the following.

\begin{lemma}\label{the graph on W and its valency}
Assume a strongly regular graph $G$ exists with parameters $(1911, 270, 105, 27)$ such that $G$ has an induced quadrangle, say $x \sim u \sim y \sim v \sim x$. Let $W: = \{ w\sim x \mid w\neq u, w\neq v, w \not\sim u, w\not\sim v\}$ and let $Z:= \{z \in W \mid \text{there exists a vertex } z' \in W\backslash \{z\} \text{ such that } z' \not\sim z\}$. Let $\Gamma_{W}$ be the subgraph induced on $W$. For $w \in W$, let $k_w$ be the valency of $w$ in $\Gamma_{W}$. For $z \in Z$, let $K_z$ (resp. $\tilde{K}_z$) be the subgraph induced on $\{ w \in W \mid w \neq z, w \not\sim z\}$ (resp. $\{w \in W \mid w \neq z, w \not\sim z\} \cup \{x\}$). Then the following hold:
\begin{enumerate}
  \item[(1)] The graph $K_z$ and $\tilde{K}_z$ are complete and $K_z$ has at least $28$ vertices,
  \item[(2)] $|W| \in \{82,83\}$,
  \item[(3)] For $w \in W$ we have $k_w \in \{53, 54, |W|-1\}$. Moreover, if $|W| =82$, then $k_w \in \{53, |W|-1\}$ and any two distinct non-adjacent vertices $z,z' \in Z$ have $c(z,z') =26$.
\end{enumerate}

\end{lemma}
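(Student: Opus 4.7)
The plan is to apply Lemma \ref{number of common neighbours in W} to independent $4$-sets of the form $\{u, v, z, z'\}$ inside $\Delta(x)$, and to count non-neighbours of $z$ in $\Delta(x)$.

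I start with (2). Since $u, v \in \Delta(x)$ are non-adjacent and each has $\lambda = 105$ neighbours inside $\Delta(x)$, and the two sets $A := N(u) \cap \Delta(x)$ and $B := N(v) \cap \Delta(x)$ both avoid $\{u, v\}$, inclusion-exclusion yields $|A \cup B| = 210 - c(u, v)$. Hence $|W| = 268 - (210 - c(u, v)) = 58 + c(u, v)$, and since Lemma \ref{number of common neighbours in W} forces $c(u, v) \in \{24, 25\}$, we obtain $|W| \in \{82, 83\}$.

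For (1), fix $z \in Z$ and choose $z' \in W \setminus \{z\}$ with $z' \not\sim z$. If $K_z$ were not a clique, pick non-adjacent $z_1, z_2 \in K_z$; then $\{u, v, z, z_1, z_2\}$ is an independent set of order $5$ inside $\Delta(x)$ containing both $u$ and $v$, contradicting the lemma immediately preceding Lemma \ref{number of common neighbours in W}. Thus $K_z$ is complete, and appending $x$ (adjacent to every vertex of $W$) keeps $\tilde K_z$ complete. To bound $|K_z|$ from below, I count the $164 = 269 - 105$ non-neighbours of $z$ inside $\Delta(x) \setminus \{z\}$, which partition as $\{u, v\}$, $(A \cup B) \setminus N(z)$, and $K_z$. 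With $|A \cap N(z)| = c(u, z)$, $|B \cap N(z)| = c(v, z)$, and $d := |A \cap B \cap N(z)|$ equal to the number of common neighbours of $u, v, z$ in $\Delta(x)$, this yields the identity
\[ |K_z| \;=\; c(u, v) + c(u, z) + c(v, z) - d - 48. \]

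Applying Lemma \ref{number of common neighbours in W} to $U = \{u, v, z, z'\}$, in case (1) every vertex of $\Delta(x)$ has at most two neighbours in $U$ and so $d = 0$, while in case (2) a unique vertex has three neighbours in $U$ and so $d \leqslant 1$. Combined with the admissible values $c(u, v) \in \{24, 25\}$ and $c(u, z), c(v, z) \in \{25, 26\}$ (with at most one of $c(u, z), c(v, z)$ equal to $25$) enumerated by the lemma, a short case check gives $|K_z| \in \{28, 29\}$ in every situation, proving (1). Part (3) then follows: vertices $w \in W \setminus Z$ satisfy $k_w = |W| - 1$ by definition of $Z$, whereas for $w \in Z$ we have $k_w = |W| - 1 - |K_w| \in \{|W| - 30, |W| - 29\}$, giving $k_w \in \{53, 54\}$ when $|W| = 83$. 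When $|W| = 82$ we have $c(u, v) = 24$, which forces the subcase $(24, 26, 26, 26, 26, 26)$ of case (1) of Lemma \ref{number of common neighbours in W}; this gives $c(u, z) = c(v, z) = 26$ and $|K_z| = 28$, hence $k_z = 53$, and also yields $c(z, z') = 26$ for every pair of non-adjacent $z, z' \in Z$. The main obstacle is organising the finite case analysis in the $|K_z| \geqslant 28$ step.
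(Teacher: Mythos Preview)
Your proof is correct and follows essentially the same approach as the paper's. Both arguments use the forbidden independent $5$-set containing $u,v$ to get completeness of $K_z$, the inclusion--exclusion identity $|W|=k-2(\lambda+1)+c(u,v)$ for part~(2), and Lemma~\ref{number of common neighbours in W} applied to $U=\{u,v,z,z'\}$ for part~(3); the only cosmetic difference is that you compute $|K_z|=c(u,v)+c(u,z)+c(v,z)-d-48$ directly and then deduce $k_z=|W|-1-|K_z|$, whereas the paper computes $k_z=105-c(u,z)-c(v,z)+d$ first and recovers $|K_z|$ from it---the two formulas are equivalent via $|W|=58+c(u,v)$.
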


\begin{proof}
(1): If $K_z$ is not complete, then there would be an independent set $U$ of order $5$ containing $u$ and $v$, a contradiction with Lemma \ref{coclique}. As $K_z$ is a subgraph of $\Delta(x)$, it is clear that $\tilde{K}_z$ is complete as well. We will show later that $K_z$ has at least $28$ vertices.

(2): We have $|W| = k-2(\lambda +1) + c(u,v)$. As $k=270$, $\lambda = 105$ and $c(u,v) \in \{24,25\}$, we find $|W| \in \{82,83\}$. This shows (2).

(3): Let $w \in W$. If $w \not\in Z$, then $k_w =|W|-1$. So, let $w \in Z$ and $w' \in Z\backslash \{z\}$ such that $w \not\sim w'$. We have $c(u,v) \in \{24,25\}$.

If $c(u,v)=24$, then $|W|=82$, and $c(a_1,a_2) =26$ for all $\{a_1,a_2\} \in \binom{\{u,v,w,w'\}}{2}$ and $\{a_1,a_2\} \neq \{u,v\}$, by Lemma \ref{number of common neighbours in W}(1). In particular, we have $c(w,w')=26$ and $k_w =k_{w'} = 105-2\times26 = 53$. In this case, $K_w$ has $82-53-1=28$ vertices.

If $c(u,v)=25$, then $|W|=83$. We have $c(u,w), c(v,w) \in \{25,26\}$. If one of $c(u,w)$ or $c(v,w)$ is equal to $25$, then $k_w = 105-c(u,w) -c(v,w)= 105-25-26=54$, by Lemma \ref{number of common neighbours in W}(1). If $c(u,w) =c(v,w) =26$, then $54 = 105-c(u,w)-c(v,w) +1 \geqslant 105-c(u,w)-c(v,w) =53$, by Lemma \ref{the graph on W and its valency}(2). This implies $k_w$ has at least $83-54-1 = 28$ vertices. This shows the lemma.
\end{proof}

\begin{lemma}\label{on 83 vertices}
Assume a strongly regular graph $G$ with parameters $(1911,270,105,27)$ exists containing an induced quadrangle, say $x \sim u \sim y \sim v \sim x$. Let $W: = \{ w\sim x \mid w\neq u, w\neq v, w \not\sim u, w\not\sim v\}$. Then $|W| \neq  83$.
\end{lemma}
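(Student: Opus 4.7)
The plan is to assume $|W|=83$ for contradiction and combine the structural data of Lemma~\ref{the graph on W and its valency} with the clique restrictions of Section~\ref{sec large cliques}. By that lemma $c(u,v)=25$ and each $w\in W$ satisfies $k_w\in\{53,54,82\}$; I set $I:=\{w\in W:k_w=82\}$, $Z:=W\setminus I$, $c:=|I|$, and $Z_{53}:=\{w\in W:k_w=53\}$. The cliques $K_z$ from Lemma~\ref{the graph on W and its valency}(1) make $\overline{\Gamma}_W$ triangle-free, and the maximum-clique bound $32$ (Lemma~\ref{clique}) applied to $K_w\cup I\cup\{x\}$ for any $w\in Z_{53}$ forces $c\leqslant 2$. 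I shall run the main argument assuming $Z_{53}\ne\emptyset$ and handle the parallel case $Z=Z_{54}$ at the end with $|K_w|=28$.

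Fix $w\in Z_{53}$ and set $C:=K_w\cup I\cup\{x\}$, a clique of order $c':=30+c$. The key step is to pin down $|K_w\cap K_{w'}|$ for every $w'\in Y_w\cap Z$, where $Y_w:=N_{\Gamma_W}(w)$. Combining the restriction of Table~\ref{table} (applied to $C$) with the $\mu$-bound---every $y\notin C$ having a non-neighbour $v\in C$ satisfies $N(y)\cap C\subseteq N(v)$, hence $|N(y)\cap C|\leqslant\mu=27$---restricts $|N(y)\cap C|$ to $[0,t_{\min}]\cup[t_{\max},27]\cup\{c'\}$, with $c'$ itself possible only when $\{y\}\cup C$ still obeys the clique bound $32$. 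A direct count gives $|N(w')\cap C|=c'-|K_w\cap K_{w'}|$; for $c=2$ this translates to $|K_w\cap K_{w'}|\in\{5\}\cup[25,29]$, and to analogous broader discrete sets for $c\in\{0,1\}$.

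Next I plan to exploit the global identity
\[
\sum_{w'\in Y_w\cap Z}|K_w\cap K_{w'}|=\sum_{z\in K_w}(|K_z|-1)=783+|K_w\cap Z_{53}|,
\]
obtained by double counting edges from $K_w$ to $Y_w$ in $\overline{\Gamma}_W$, to bound how many $w'$ fall into the ``low'' bracket ($|K_w\cap K_{w'}|=5$) versus the ``high'' one ($\geqslant 25$). Vertices $w'$ with $|K_w\cap K_{w'}|=29$ are twins of $w$ (same $\overline{\Gamma}_W$-neighbourhood). For each low vertex $y$ the set $K_y\setminus K_w$ is a clique of size $|K_y|-5\in\{23,24\}$, and using triangle-freeness I would argue that it cannot meet $I$, the twin set, $K_w$, or the ``high-non-twin'' vertices of $Y_w\cap Z$ (the last step uses $K_y\cap K_{y'}=\emptyset$ for $y\sim_{\overline{\Gamma}_W}y'$ combined with $|K_y\cap K_w|+|K_{y'}\cap K_w|\leqslant|K_w|=29$). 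Hence $K_y\setminus K_w$ lies inside the low set $N_{\mathrm{low}}$, so $\overline{\Gamma}_W|_{N_{\mathrm{low}}}$ is triangle-free with minimum degree $\geqslant 23$; Mantel's inequality then forces $|N_{\mathrm{low}}|\geqslant 46$, contradicting the counting bound $|N_{\mathrm{low}}|\leqslant 29$.

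I expect the main delicacy to be handling the remaining sub-cases cleanly: when $c\in\{0,1\}$ (so $c'\in\{30,31\}$) the admissible set for $|K_w\cap K_{w'}|$ widens per Table~\ref{table} and now includes the value $0$ (corresponding to $|N(w')\cap C|=c'$, which $\{w'\}\cup C$ can just barely accommodate), and when $Z_{53}=\emptyset$ the argument repeats with $|K_w|=28$ and $c'=29+c$. The same counting-plus-Mantel scheme should carry through, but the discrete brackets and the verification that $K_y\setminus K_w$ stays inside the appropriate low set require separate checks; in some sub-cases I may also need Lemma~\ref{27 clique} to constrain the ``high'' bracket via its analysis of pairs of cliques of order $\geqslant 29$ intersecting in $\geqslant 22$ vertices.
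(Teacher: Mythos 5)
Your setup is sound and, where it applies, elegant: $\overline{\Gamma}_W$ is indeed triangle-free, the double count $\sum_{w'\in Y_w\cap Z}|K_w\cap K_{w'}|=783+|K_w\cap Z_{53}|$ is correct, the translation $|N(w')\cap C|=c'-|K_w\cap K_{w'}|$ is right, and for $c'=32$ the bracket $\{5\}\cup[25,29]$ together with $5+25>29=|K_w|$ does force $K_y\setminus K_w\subseteq N_{\mathrm{low}}$, after which your counting ($n_{\mathrm{low}}\leqslant 29$ but $n_{\mathrm{low}}\geqslant 19>0$) and the min-degree-$23$ triangle-free argument give a genuine contradiction. This is a different mechanism from the paper's, which instead proves that the degree-$54/82$ vertices together with $x$ form a clique (hence at least $52$ vertices of degree $53$), picks non-adjacent $z,z'$ of degree $53$ with $|C(z,z')\cap W|=25$, finds a vertex $w\in C(z,z')$ with at least $21$ neighbours in $C(z,z')\cap W$, and uses pigeonhole plus Table \ref{table} to produce maximal cliques containing $\tilde{K}_w$ and $\tilde{K}_{z'}$, of orders at least $29$ and $30$, meeting in at least $22$ vertices, contradicting Lemma \ref{27 clique}.

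The genuine gap is that your central exclusion step provably fails for $c'\leqslant 31$, and nothing forces $c'=32$; in fact $I=\emptyset$ ($c=0$) is the generic situation, so these are the essential cases, not residual ones. Concretely, for $c=1$ ($c'=31$, Table row $31$: $t\leqslant 7$ or $t\geqslant 26$, $\mu$-cap $27$, and $t=31$ now admissible since $C\cup\{w'\}$ is only a $32$-clique) the intersections lie in $\{0\}\cup\{4,5\}\cup[24,29]$, and a low vertex with intersection $4$ adjacent in $\overline{\Gamma}_W$ to a high vertex with intersection $24$ gives $4+24=28\leqslant 29=|K_w|$: your disjointness bound yields no contradiction, so low vertices may have high $\overline{\Gamma}_W$-neighbours. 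For $c=0$ ($c'=30$) the brackets widen to $\{0\}\cup[3,6]\cup[22,29]$ with $3+22=25\leqslant 29$, and intersection-$0$ vertices escape the disjointness constraint entirely, so $N_{\mathrm{low}}$ is not closed under $\overline{\Gamma}_W$-neighbourhoods and the Mantel-type step collapses. (Also, in your parallel case $Z=Z_{54}$ the clique bound gives $c\leqslant 3$, not $2$, and only $c=3$ reaches $c'=32$.) Your closing remark that the "same scheme should carry through", possibly with Lemma \ref{27 clique}, is exactly where the whole difficulty of the lemma lives --- supplying such an argument for $c'\leqslant 31$ is essentially what the paper's proof does --- so as written your proposal establishes the statement only in the special case $c'=32$.
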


\begin{proof}
Let $\Gamma_{W}$ be the subgraph of $G$ induced on $W$. In Lemma \ref{the graph on W and its valency}(3), we have seen that the valency $k_w$ in $\Gamma_W$ of a vertex $w \in W$ satisfies $k_w \in \{82,54,53\}$.

Let $Y: = \{w \in W \mid k_w \in \{54,82\}\} \cup \{x\}$.

\begin{claim}\label{complete subgraph}
The induced subgraph of $\Gamma_W$ on $Y$ is complete.
\end{claim}

\noindent{\bf Proof of Claim \ref{complete subgraph}.} Clearly $x$ is adjacent to all the other vertices in $Y$. Let $w \in W$ such that $k_w \in \{82,54\}$. If $k_w =82$, then $w$ is adjacent to all other vertices of $\Gamma_W$. So, we only need to show that, if $w$ and $w'$ are distinct vertices in $W$ such that $k_w = k_{w'} = 54$, then $w \sim w'$.

Let $w$ be such that $k_w =54$. Let $w' \in W\backslash \{w\}$ be such that $w \not\sim w'$. There are, without loss of generality, two cases namely, $c(u,w) =25$ and $c(v,w)=26$ or, $c(u,w)=c(v,w) =26$. In the first case $c(u,w') = 26= c(w',v)$ and $k_{w'} = 105-c(u,w')-(v,w') = 53$, by Lemma \ref{number of common neighbours in W}(1). In the second case there exists a vertex $z \sim x$ such that $u \sim z \sim v$ and $z \sim w$. This means that $c(u,w') = 26 = c(v,w')$ and $k_{w'} = 105-c(u,w')-c(v,w') =53$, by Lemma \ref{number of common neighbours in W}(2). \qed

By Lemma \ref{clique}, a clique of $G$ has order at most $32$, so $|Y| \leqslant 32$. This means that $|\{w \in W \mid k_w =53\}| \geqslant 83-31 = 52$. So there are two distinct non-adjacent vertices $z$ and $z'$ in $W$ with $k_z = k_{z'} =53$. We have $c(z,z') \in \{25,26\}$. There is no vertex $\hat{z}$ that is adjacent to $u,v$ and one of $z$ and $z'$. This means that, if $c(z,z') =26$, then there exists a vertex $\tilde{z}$ adjacent to $z,z'$ and one of $u$ and $v$. This means $|C(z,z') \cap W|=25$ holds whether $c(z,z') = 25$ or $c(z,z') = 26$. This means that the cliques $K_z$ and $K_{z'}$ both have exactly $29$ vertices, where $K_z$ is defined as in Lemma \ref{the graph on W and its valency}.

Assume that there are two distinct vertices in $C(z,z')$ that are adjacent to all vertices in $K_z$. Then they have at least $28$ common neighbours and hence are adjacent. This implies there are at most $2$ vertices in $C(z,z')$ such that they are adjacent to all vertices in $\tilde{K}_z$ and at most $2$ vertices in $C(z,z')$ such that they are adjacent to all vertices in $\tilde{K}_{z'}$, where $K_z$ and $\tilde{K}_z$ are as defined in Lemma \ref{the graph on W and its valency}. So there exists a vertex $w \in C(z,z')$ that is not adjacent to all vertices of $\tilde{K}_z$ and not adjacent to all vertices of $\tilde{K}_{z'}$. We find $k_w \in \{53,54\}$. Either $w$ is adjacent to all other vertices in $C(z,z')$ or there exist $w' \in C(z,z')\backslash\{w\}$ such that $w \not\sim w'$.

\begin{claim}\label{intersection of 3 vertices in W}
$|G(w) \cap G(x) \cap G(z) \cap G(z')|\geqslant 22$.
\end{claim}

\noindent{\bf Proof of Claim \ref{intersection of 3 vertices in W}.} We may assume that there exists a vertex $w' \in C(z,z')\backslash \{w\}$ such that $w \not\sim w'$, as otherwise we are done. We have $|G(x)\cap G(z)\cap G(z')| = c(z,z') = 25$, $|G(z)\cap G(z')| =27$ and $|G(w)\cap G(z)\cap G(z')| \geqslant 24$. The last statement follows from Lemma \ref{number of common neighbours in W}, as $w \sim z \sim w' \sim z' \sim w$ is an induced quadrangle of $G$. Now $|G(w) \cap G(x) \cap G(z) \cap G(z')|\geqslant 25+24-27 = 22$. This shows Claim \ref{intersection of 3 vertices in W}. \qed

Claim \ref{intersection of 3 vertices in W} implies that $w$ has at least $21$ neighbours in $W \cap C(z,z')$. So, without loss of generality, the vertex $w$ has at least $ \frac{53-25}{2}=14$ neighbours in $K_z$ and hence at least $15$ vertices in $\tilde{K}_z$. By Table \ref{table}, we find that $w$ has at least $24$ neighbours in $\tilde{K}_z$ and hence at least $23$ neighbours in $K_z$. This means that $w$ has at most $54-21-23= 10$ neighbours in $K_{z'}$, by Claim \ref{intersection of 3 vertices in W}. By Table \ref{table}, we find that $w$ has at most $8$ neighbours in $\tilde{K}_{z'}$.

Now we consider $K_w$. Then $K_w$ has at least $28$ vertices, and $\tilde{K}_w$ and $\tilde{K}_{z'}$ intersect in at least $30-8=22$ vertices.

Now consider a maximal clique $C_1$ of $G$ containing $\tilde{K}_w$ and a maximal clique $C_2$ of $G$ containing $\tilde{K}_{z'}$. We have $w \sim z \in V(K_{z'})$. This means that $C_2$ does not contain $C_1$. Also $K_w \cap K_z \neq \emptyset$, so $C_1$ does not contain $C_2$. This means that $C_1 \neq C_2$. As $|V(C_1)| \geqslant 29$ and $|V(C_2)| \geqslant 30$, by Lemma \ref{27 clique}, we find that $C_1$ or $C_2$ is not maximal, a contradiction. This shows the lemma.

\end{proof}

\begin{lemma}\label{on 82 vertices}
Assume a strongly regular graph $G$ with parameters $(1911,270,105,27)$ exists containing an induced quadrangle, say $x \sim u \sim y \sim v \sim x$. Let $W: = \{ w\sim x \mid w\neq u, w\neq v, w \not\sim u, w\not\sim v\}$. Then $|W| \neq  82$.
\end{lemma}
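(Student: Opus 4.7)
The plan is to parallel the proof of Lemma \ref{on 83 vertices}: assume $|W|=82$ for contradiction, so $c(u,v)=24$ and we sit in case~(1) of Lemma \ref{number of common neighbours in W}, with every $w\in W$ having $k_w\in\{53,81\}$ by Lemma \ref{the graph on W and its valency}(3).

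\emph{Step 1 (bounding $|Y|$).} Let $Y=\{w\in W:k_w=81\}$ and $Z=W\setminus Y$. Then $\Gamma_W=K_{|Y|}\nabla\Gamma_Z$ where $\Gamma_Z$ is $(53-|Y|)$-regular on $82-|Y|$ vertices. The partition $\{Y,Z\}$ is equitable for $\Gamma_W$, and since $\Gamma_W$ is an induced subgraph of $G$ we have $\lambda_{\min}(\Gamma_W)\geq -3$; by Lemma \ref{quotientmatrix} the same holds for the quotient matrix
\[
Q=\begin{pmatrix} |Y|-1 & 82-|Y| \\ |Y| & 53-|Y| \end{pmatrix}.
\]
Since $\mathrm{tr}(Q+3I)=58$ and $\det(Q+3I)=112-28|Y|$, the condition $\lambda_{\min}(Q)\geq -3$ forces $|Y|\leq 4$, hence $|Z|\geq 78$.

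\emph{Step 2 (five-block decomposition).} Pick non-adjacent $z,z'\in Z$ (they exist because $\Gamma_Z$ is regular of valency $53-|Y|<|Z|-1$). Lemma \ref{the graph on W and its valency}(3) gives $c(z,z')=26$, and case~(1) of Lemma \ref{number of common neighbours in W} applied to $U=\{u,v,z,z'\}$ forces each vertex of $C(z,z')$ to have exactly two neighbours in $U$ (namely $z$ and $z'$), hence to lie in $W$. Setting $A_z:=N_W(z)\setminus N_W(z')$ and $A_{z'}$ symmetrically, one obtains
\[
W=\{z\}\cup\{z'\}\cup C(z,z')\cup A_z\cup A_{z'},\qquad |A_z|=|A_{z'}|=27.
\]
Since $K_{z'}=\{z\}\cup A_z$ and $K_z=\{z'\}\cup A_{z'}$ are $28$-cliques (Lemma \ref{the graph on W and its valency}(1)), $A_z$ and $A_{z'}$ are themselves $27$-cliques. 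Solving $(a-6)(23-a)\leq 36$, the instance of Lemma \ref{GKPresult1} relevant to a $27$-clique, shows that any vertex outside a $27$-clique has either at most $8$ or at least $21$ neighbours in it.

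\emph{Step 3 (locating a large intersection).} For $c\in C(z,z')\cap Z$, Lemma \ref{number of common neighbours in W} applied to an induced quadrangle through $c$ gives $|G(c)\cap G(z)\cap G(z')|\geq 24$, and inclusion--exclusion against $|G(z)\cap G(z')|=27$ yields $|G(c)\cap G(x)\cap G(z)\cap G(z')|\geq 26+24-27=23$; consequently $n_{A_z}(c)+n_{A_{z'}}(c)\leq 28$. For $w\in A_z\cap Z$, the identity $k_w=53$ rewrites as $n_C(w)+n_{A_{z'}}(w)=26$. Double counting the edges between $C(z,z')$ and $A_z\cup A_{z'}$ against these bounds yields $|E(A_z,A_{z'})|\geq 338-13|Y|$, which together with the dichotomy $n_{A_{z'}}(w)\in\{0,\dots,8\}\cup\{21,\dots,26\}$ should produce a $w\in A_z$ with $n_{A_{z'}}(w)\leq 7$, so that $|\tilde K_z\cap \tilde K_w|\geq 2+(27-7)=22$.

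\emph{Step 4 (applying Lemma \ref{27 clique}).} Case~(1) of Lemma \ref{27 clique} contradicts maximality of one of the two $29$-cliques $\tilde K_z,\tilde K_w$ exactly as in Lemma \ref{on 83 vertices}. The delicate point is case~(2): both cliques have order exactly $29$, so unlike the $83$-vertex proof this case is not excluded by an order mismatch. I would rule it out by pairing $\tilde K_w$ also with $\tilde K_{z'}$ (or with $\tilde K_{\tilde w}$ for another vertex $\tilde w\in A_{z'}$ extracted by a symmetric edge count) and showing that two simultaneous ``maximal $29$-cliques meeting in exactly $27$'' configurations cannot coexist inside the rigid five-block decomposition of $W$ obtained in Step~2.

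\smallskip
\noindent\emph{Main obstacle.} The last step is the hard part: case~(2) of Lemma \ref{27 clique} comes for free in the proof of Lemma \ref{on 83 vertices} because of the clique-order mismatch (one of the cliques there has order $30$), but here every naturally arising clique has order exactly $29$, so this case has to be eliminated by hand using the fine structure of $W$. The bound $|Y|\leq 4$ from Step~1 is precisely what makes the five-block decomposition rigid enough for such a direct combinatorial argument to succeed.
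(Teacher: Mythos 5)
There is a genuine gap, and it sits exactly where you flagged it: Step 4 is a plan, not a proof. In the paper's argument the case you defer --- two maximal $29$-cliques meeting in exactly $27$ vertices, i.e.\ case (2) of Lemma \ref{27 clique} --- is not excluded at all; it is \emph{accepted}, and the entire second half of the paper's proof is devoted to deriving a contradiction from it. Concretely, after forcing $\tilde{K}_z$ and $\tilde{K}_w$ into the exact-$27$ configuration, the paper introduces a third $28$-clique $K_{p'}$, shows (by counting vertices adjacent to all of $K_{z'}$ resp.\ all of $K_w$, at most $5$ each) that some $q \in K_{p'}$ has at most $25$ neighbours in both, hence at most $13$, hence at most $3$ in one of them, producing a \emph{second} exact-$27$ configuration; the final blow is a double count of the edges between $V(K_{z'}) \cup V(K_w)$ and $V(K_{p'})$: exactly $26 \times 28 = 728$ by $53$-regularity of $\Gamma_W$, but at most $2\times26\times26 - 2\times24\times21 = 344$ from the clique-intersection data. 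Nothing in your five-block decomposition obviously substitutes for this count, and ``two simultaneous configurations cannot coexist'' is precisely what has to be proved.

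Your quantitative engine is also too weak to reach even the configuration stage. You apply Lemma \ref{GKPresult1} directly to a $27$-clique, getting the dichotomy $\leqslant 8$ or $\geqslant 21$; the paper instead forms the join $K_4 \nabla \Gamma_W$ --- legitimate exactly because $\Gamma_W$ is $53$-regular on $82$ vertices and $(-3-53)(-3+1-4)=336 \geqslant 328 = 82\cdot 4$ in Lemma \ref{Join with K_n} --- and reads off the row $c=32$ of Table \ref{table} to get $\leqslant 3$ or $\geqslant 23$ neighbours in each $28$-clique $K_w$. Your weaker split leaves the boundary values alive, and they break Step 3: with $n_{A_{z'}}(w)=8$ you get only $|\tilde{K}_z \cap \tilde{K}_w| \geqslant 21$, below the threshold $22$ required by Lemma \ref{27 clique}. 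You notice this and ask for $\leqslant 7$, but your proposed source, the bound $|E(A_z,A_{z'})| \geqslant 338 - 13|Y|$, points the wrong way: a \emph{lower} bound on the edge count pushes vertices to the high side of the dichotomy and cannot manufacture a vertex of degree $\leqslant 7$; the paper finds its low-degree vertex by the opposite route (few vertices can be adjacent to all of $K_w$, since any two such would have $\geqslant 28$ common neighbours and be adjacent, plus the $\geqslant 22$-neighbours-in-$C(w,w')$ claim). Finally, note that the paper deliberately splits off the valency-$81$ vertices into a reduction to the $|W|=83$ argument with $30$-vertex cliques $\hat{K}_z$ (where the order mismatch kills case (2) for free), rather than carrying them along: the join trick needs $\Gamma_W$ regular, so your $|Y| \leqslant 4$ --- which is correct, and a nice quotient-matrix observation ($\det(Q+3I)=112-28|Y|$) --- would actually obstruct the one known repair of Steps 3--4.
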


\begin{proof}
Let $\Gamma_{W}$ be the subgraph of $G$ induced on $W$. In Lemma \ref{the graph on W and its valency}(3), we have seen that the valency $k_w$ in $\Gamma_W$ of a vertex $w \in W$ satisfies $k_w \in \{82, 53\}$.  There are two distinct vertices $z,z' \in W$ such that $z \not\sim z'$ and hence $k_z = k_{z'} =53$, as $\Gamma_W$ is not complete.

If there exists a vertex $w' \in W$ such that $k_{w'} =81$, then consider the induced subgraphs $\hat{K}_z$ (resp. $\hat{K}_{z'}$) on $\{w \in W \mid w \neq z, w \not\sim z\} \cup \{w'\} \cup \{x\}$ (resp. $\{w \in W \mid w \neq z', w \not\sim z'\} \cup \{w'\} \cup \{x\}$). As $u$ and $v$ does not lie in an independent set of order $5$ inside $\Delta(x)$, we see that $\hat{K}_z$ and $\hat{K}_{z'}$ are complete. Now the proof follows the proof of Lemma \ref{on 83 vertices} in this case, as $|V(\hat{K}_z)|=30=|V(K)_{z'}|$. We leave the details for the reader.

So we may assume that all vertices of $\Gamma_W$ have valency $53$ and any two distinct non-adjacent vertices in $\Gamma_W$ have exactly $26$ common neighbours. Note that, by Lemma \ref{clique}, any clique $C$ in $\Gamma_W$ has at most $31$ vertices, as the induced subgraph of $G$ on $\{x\} \cup V(C)$ is complete.

Consider the join $K_4\nabla \Gamma_W$. Then $\lambda_{\min}(K_4\nabla \Gamma_W) \geqslant -3$ by Lemma \ref{Join with K_n}, as $\lambda_{\min}(\Gamma_W) \geq \lambda_{\min}(G) =-3$.

Let $w$ be a vertex of $\Gamma_W$. Let $K_w$ be the subgraph on $\{z \in W \mid z \neq w, z\not\sim w\}$. As before $K_w$ is complete and has $28$ vertices. Then, in $K_4\nabla \Gamma_W$, we consider the clique $K_4\nabla K_w$. By Table \ref{table}, we find that any vertex $z$ of $K_4\nabla \Gamma_W$ outside $K_4\nabla K_w$ has at most $7$ neighbours or at least $27$ neighbours in $K_4\nabla K_w$. This means that $z$ has at most $3$ neighbours or at least $23$ neighbours in $K_w$.

Let $w,w'$ be two distinct non-adjacent vertices in $\Gamma_W$. There are at most $3$ vertices in $C(w,w')$ that are adjacent to all vertices in $K_w$, and similarly there are at most $3$ vertices in $C(w.w')$ that are adjacent to all vertices in $k_{w'}$ as $|V(K_w)|=28=|V(K_{w'})|$. So there exist a vertex $z \in C(w,w')$ that is not adjacent to some vertex $p$ (resp. $p'$) in $K_w$ (resp. $K_{w'}$). As $z$ has at least $22$ neighbours in $C(w,w')$, as in Claim \ref{intersection of 3 vertices in W} of Lemma \ref{on 83 vertices}, without loss of generality, we see that $z$ has at most $\lfloor \frac{53-22}{2} \rfloor = 15 < 23$ neighbours in $K_w$. So $z$ has at most $3$ neighbours in $K_w$. So the two cliques $K_z$ and $K_w$ intersect in at least $25$ vertices.

Now consider the induced subgraphs $\tilde{K}_z$ and $\tilde{K}_w$ on $\{x\} \cup V(K_z)$ and $\{x\} \cup V(K_w)$ respectively. Then $\tilde{K}_z$ and $\tilde{K}_w$ both have $29$ vertices and intersect in at least $26$ vertices. As $z' \in V(K_z)\backslash V(K_w)$ and $p' \in V(K_w) \backslash V(K_z)$ we see that $\tilde{K}_z$ and $\tilde{K}_w$ must intersect in precisely $27$ vertices and both are maximal, by Lemma \ref{27 clique}. We have $z' \in V(K_z)\backslash V(K_w)$ and $p' \in V(K_w) \backslash V(K_z)$ and, $K_z$ and $K_w$ intersect in exactly $26$ vertices.

Now consider $K_{z'}$. The clique $K_{z'}$ and $K_w$ intersects in exactly two vertices under which $p'$. Consider any vertex $q$ of the clique $K_{p'}$. Then
\begin{enumerate}
  \item[(1)] $q$ has $26$ neighbours in $K_{z'}$,
  \item[(2)] $q$ has $26$ neighbours in $K_w$, or
  \item[(3)] $q$ has at most $25$ neighbours in $K_{z'}$ and at most $25$ neighbours in $K_w$.
\end{enumerate}
There are at most $5$ vertices in case (1), and at most $5$ vertices in case (2). This means that there exists a vertex $q$ in the clique $K_{p'}$ which has at most $25$ neighbours in $K_{z'}$ and at most $25$ neighbours in $K_w$. Without loss of generality, we may assume that $q$ has at most $\frac{26}{2} = 13$ neighbours in $K_{z'}$. Then $q$ has at most $3$ neighbours in $K_{z'}$ and, $K_q$ and $K_{z'}$ intersect in at least $25$ vertices. This implies the induced subgraphs $\tilde{K}_q$ and $\tilde{K}_{z'}$, on $\{x\} \cup V(K_q)$ and $\{x\} \cup V(K_{z'})$ respectively, have both $29$ vertices and they intersect in $27$ vertices, by Lemma \ref{27 clique}. But this means that $K_q$ and $K_{z'}$ intersect in exactly $26$ vertices and, $K_q$ and $K_{z'}$ intersect in exactly $4$ vertices.

We obtain that every vertex in $V(K_q)\backslash V(K_w)$ has at least $23$ neighbours in $V(K_w)$. So this means that there are at least $24 \times 21$ edges between $V(K_q)\backslash V(K_w)$ and $V(K_w)\backslash V(K_q)$. 

This implies that the number of edges between $V(K_{z'}) \cup V(K_w)$ and $V(K_{p'})$ is at most $2\times26\times26 - 2\times24\times21= 344$. On the other hand every vertex in $K_{p'}$ has valency $53$, so the number of edges between $V(K_{z'})\cup V(K_w)$ and $V(K_{p'})$ is exactly $26\times28=728 > 344$, a contradiction. This shows the lemma.
\end{proof}

Now we give the proof of the main theorem:

\noindent{\bf Proof of Theorem \ref{SRG(1911,270,105,27)}.}
By Lemma \ref{quadrangle}, we see that $G$ has an induced quadrangle, say $x \sim u \sim y \sim v \sim x$.  Let $W = \{ w \sim x \mid w \neq u, w \neq v, w \not\sim u, w\not\sim v\}$. Then, by Lemma \ref{the graph on W and its valency}, we have $|W| \in \{82,83\}$. By Lemmas \ref{on 83 vertices} and \ref{on 82 vertices}, we see that this is not possible. This finishes the proof. \qed

\section*{Acknowledgments}
\indent
J.H. Koolen is partially supported by the National Natural Science Foundation of China (No. 12071454), Anhui Initiative in Quantum Information Technologies (No. AHY150000), and
the project "Analysis and Geometry on Bundles" of Ministry of Science and Technology of the People's Republic of China.

Brhane Gebremichel is supported by a Chinese Scholarship Council at University of Science and Technology of China, China.

\bibliographystyle{plain}
\bibliography{KG202108}

\begin{thebibliography}{1}

\bibitem{BCN}
A.~E. Brouwer, A.~M. Cohen, and A.~Neumaier.
\newblock {\em Distance-Regular Graphs}.
\newblock Springer-Verlag, Berlin Heidelberg, 1989.

\bibitem{BH11}
A.~E. Brouwer and W.~H. Haemers.
\newblock {\em Spectra of Graphs}.
\newblock Springer, New York, 2012.

\bibitem{CKMY21}
M.-Y. Cao, J.~H. Koolen, A.~Munemasa, and K.~Yoshino.
\newblock Maximality of {Seidel} matrices and switching roots of graphs.
\newblock {\em Graphs Combin.}, 2021.

\bibitem{Gavrilyuk10}
A.~L. Gavrilyuk.
\newblock On the {Koolen-Park} inequality and {Terwilliger} graphs.
\newblock {\em Elec. J. Combin.}, 17:{\#}R125, 2010.

\bibitem{GD01}
C.~Godsil and G.~Royle.
\newblock {\em Algebraic Graph Theory}.
\newblock Springer-Verlag, Berlin, 2001.

\bibitem{GKP2021}
G.~R. Greaves, J.~H. Koolen, and J.~Park.
\newblock Augmenting the {Delsarte} bound: a forbidden interval for the order
  of maximal cliques in strongly regular graphs.
\newblock {\em European J. Combin.}, 97:103384, 2021.

\bibitem{KP10}
J.~H. Koolen and J.~Park.
\newblock Shilla distance-regular graphs.
\newblock {\em European J. Combin.}, 31:2064--2073, 2010.

\bibitem{QP17}
Z.~Qiao and Y.~Pan.
\newblock A note on optimistic strongly regular graphs.
\newblock {\em J. University of Science and Technology of {China}},
  47(3):197--203, 2017.

\end{thebibliography}

\end{document}